\documentclass[letterpaper,11pt]{amsart}
\usepackage{amsmath,amssymb,amsthm,pinlabel,tikz,hyperref,mathrsfs,color, thmtools}
\usepackage{verbatim}
\usepackage{float}
\usepackage{caption}
\usepackage{subcaption}
\usepackage{tikz}
\usepackage{enumitem}

\newcommand{\nc}{\newcommand}
\nc{\dmo}{\DeclareMathOperator}
\dmo{\ra}{\rightarrow}
\dmo{\Prob}{\mathbb{P}}
\dmo{\E}{\mathbb{E}}
\dmo{\N}{\mathbb{N}}
\dmo{\Z}{\mathbb{Z}}
\dmo{\Q}{\mathbb{Q}}
\dmo{\R}{\mathbb{R}}
\dmo{\C}{\mathcal{C}}
\dmo{\X}{\mathcal{X}}
\dmo{\U}{\mathcal{U}}
\dmo{\T}{\mathcal{T}}
\dmo{\F}{\mathcal{F}}
\dmo{\w}{\omega}
\dmo{\ev}{\operatorname{ev}}
\dmo{\diam}{\operatorname{diam}}
\dmo{\supp}{\operatorname{supp}}
\dmo{\Cay}{\operatorname{Cay}}
\dmo{\stopping}{\vartheta}
 
\nc{\nt}{\newtheorem}

\nt{theorem}{Theorem}

\newtheorem{thm}{{\bf Theorem}}[section]
\newtheorem{lem}[thm]{{\bf Lemma}}
\newtheorem{cor}[thm]{{\bf Corollary}}
\newtheorem{prop}[thm]{{\bf Proposition}}

\newtheorem{definition}[thm]{Definition}
\numberwithin{equation}{section}

\title{Genericity of contracting geodesics in groups}

\date{\today}
\author{Kunal Chawla}
\address{University of Toronto}
\email{kunal.chawla@mail.utoronto.ca}
\author{Inhyeok Choi}
\address{June E Huh Center for Mathematical Challanges, Korea Institute for Advanced Study}
\email{inhyeokchoi@kias.re.kr}
\author{Giulio Tiozzo}
\address{University of Toronto} 
\email{tiozzo@math.utoronto.ca}

\begin{document}
\begin{abstract}
Let $G$ be a finitely generated group and $\Cay(G, S)$ be the Cayley graph of $G$ with respect to a finite generating set $S$. We characterize the Gromov hyperbolicity of $G$ in terms of the genericity of contracting elements in $\Cay(G, S)$.

\noindent{\bf Keywords.} Gromov hyperbolic group, contracting geodesics, counting problem

\end{abstract}

\maketitle

%
%

\section{Introduction}	\label{sec:introduction}

Let $G$ be a finitely generated group and $\Cay(G, S)$ be the Cayley graph of $G$ with respect to some finite generating set $S$. A central theme of geometric group theory is to relate the geometry of $\Cay(G,S)$ with the dynamics of the group action $G \circlearrowright \Cay(G,S)$. For example, Gromov proposed the notion of hyperbolic groups \cite{gromov1987hyperbolic}, for which strong negative curvature properties of the Cayley graph imply that each element has either finite order or acts as a translation along an infinite geodesic.

An important property of 
any infinite geodesic $\gamma$ in a hyperbolic space is that it is \emph{contracting}: namely, if you project onto $\gamma$ any ball disjoint from $\gamma$, the projection has uniformly bounded diameter. 
We say an element $g$ of $G$ is \emph{contracting} if it has a contracting geodesic, called its axis, along which it acts coarsely by translation.

Recently, there has been a lot of interest in studying the contracting property in spaces that are not hyperbolic (see among others \cite{arzhantseva2015growth}, \cite{sisto2018contracting}, \cite{genevois2019}, 
\cite{CashenMackay}, \cite{yang2020genericity}), since the presence of contracting elements are sometimes sufficient to generalize results from the hyperbolic setting. 

In this paper, we show that genericity of contracting elements can indeed characterize hyperbolic groups, by proving the following dichotomy. 
Let $B(n)$ denote the ball of radius $n$ in $\Cay(G, S)$, centered at the identity. 

\begin{thm}\label{thm:counting}
Let $G$ be a finitely generated group which is not virtually cyclic and let $\Cay(G, S)$ be the Cayley graph of $G$ with respect to a finite generating set $S$. Then we have the following dichotomy: \begin{enumerate}
\item If $G$ is hyperbolic, then there exists $D>0$ such that 
\[
\lim_{n \to \infty} \frac{\#\left\{g \in B(n) : g \,\textrm{is $D$-contracting} \right\}}{\#B(n)} = 1,
\]
where the convergence is exponentially fast.
\item If $G$ is not hyperbolic, then for each $D>0$,\begin{equation}\label{eqn:countingNonHyp}
\lim_{n \to \infty}
\frac{\#\left\{g \in B(n) :  g \,\textrm{is $D$-contracting}\right\}}{\#B(n)} = 0
\end{equation} 
and the decay is exponential in $n$.
\end{enumerate}
\end{thm}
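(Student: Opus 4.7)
I would split the theorem into two independent counting statements. Part~(1), the hyperbolic case, follows from a fairly direct combination of thin-triangle geometry with standard lower bounds on stable translation length. Part~(2), the non-hyperbolic case, is the main content, and I would approach it by contradiction: assuming $D$-contracting elements are not exponentially rare, extract uniform Morse behavior along arbitrary geodesics and deduce hyperbolicity of $G$, contradicting the hypothesis.

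\textbf{Plan for Part (1).} Let $\delta$ be the hyperbolicity constant of $\Cay(G,S)$. The argument has three ingredients. (i) Every infinite-order $g\in G$ is loxodromic, and its orbit line $\{g^n\cdot e:n\in\Z\}$ is a $(\lambda,c)$-quasi-geodesic with $(\lambda,c)$ depending only on $\delta$; here one uses the uniform lower bound on stable translation length in hyperbolic groups (Delzant, Koubi). (ii) Any $(\lambda,c)$-quasi-geodesic in a $\delta$-hyperbolic space is $D$-contracting with $D=D(\delta,\lambda,c)$, so every infinite-order element is uniformly $D$-contracting. (iii) A hyperbolic group contains only finitely many conjugacy classes of finite-order elements, and each such element has bounded word length, so the torsion elements in $B(n)$ grow at most polynomially; since $G$ is not virtually cyclic, $\#B(n)$ grows exponentially, so the ratio tends to $1$ exponentially fast.

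\textbf{Plan for Part (2).} Suppose $G$ is not hyperbolic, fix $D>0$, and let $C_n(D)\subseteq B(n)$ be the set of $D$-contracting elements. The goal is $\#C_n(D)\leq e^{-cn}\#B(n)$ for some $c=c(D)>0$. Argue by contradiction: if this exponential gap fails, then $C_n(D)$ has the full exponential growth rate of $G$, and one obtains an exponentially rich family of elements with $D$-contracting quasi-axes of length a definite fraction of $n$ (since translation length is bounded below by a function of the contracting constant). The key step is then a pigeonhole/pivoting argument, in the spirit of Choi's pivoting technique or Yang's extension lemma, showing that every geodesic of $\Cay(G,S)$ of sufficient length contains a $D'$-contracting sub-segment of definite length, for some $D'=D'(D)$. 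Uniform local Morse behavior along all geodesics then forces geodesic triangles to be uniformly thin, hence $\Cay(G,S)$ is Gromov hyperbolic, contradicting the assumption.

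\textbf{Main obstacle.} The decisive difficulty is precisely this propagation step in Part~(2): converting genericity of $D$-contracting elements into a \emph{uniform} contracting property along every geodesic. Two sub-obstacles stand out. First, axes of $D$-contracting elements point in many different directions, so one needs a combinatorial/dynamical alignment mechanism (a Schottky-type construction, or the pivoting technique of Boulanger--Mathieu--Sert--Sisto and Choi) to concatenate them coherently. Second, one must rule out a persistent positive-entropy ``non-contracting'' subset of $G$ that could account for the $C_n(D)$ bulk without forcing hyperbolicity on the whole Cayley graph; this demands sharp exponential growth estimates for contracting subsets, extending the counting methods of Yang. I expect these quantitative growth/alignment estimates to occupy the bulk of the paper's technical work.
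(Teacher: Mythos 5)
Your Part (1) plan is essentially the paper's: genericity of loxodromics in hyperbolic groups (from Gekhtman--Taylor--Tiozzo) together with the fact that in a $\delta$-hyperbolic group every loxodromic is $D(\delta)$-contracting. One small caveat: your step (iii) is phrased as if torsion elements had bounded word length, which is false (a conjugacy class of a torsion element can contain elements of arbitrary length, even exponentially many of a given length); what one really needs is that torsion elements have strictly smaller exponential growth rate, which is exactly what the loxodromic-genericity result supplies, so the conclusion is correct but the ``bounded word length'' justification is not.

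Part (2) is where the proposal has a genuine gap, and it is a gap in the logic rather than in technique. You propose a contradiction argument: if $D$-contracting elements were not exponentially rare, propagate to get a $D'$-contracting sub-segment of definite length inside every sufficiently long geodesic, and conclude hyperbolicity. The fatal step is the last inference. Having a contracting sub-segment of \emph{bounded} length inside every long geodesic does not imply uniform Morse behavior, thin triangles, or hyperbolicity. Think of $\Z^2 * \Z$: every sufficiently long geodesic can pick up short contracting pieces coming from the $\Z$ factor and the free-product structure, yet the group is not hyperbolic because the $\Z^2$ flats persist. To get hyperbolicity via Proposition~\ref{prop:not-hyp} you would need \emph{every} geodesic to be $D'$-contracting in its entirety, and there is no mechanism in your proposal to upgrade ``a definite-length contracting piece'' to ``the whole geodesic is contracting.'' Moreover, even the propagation step itself is problematic: genericity of $D$-contracting elements is a statement about a large \emph{fraction} of group elements, and it is not clear how to leverage it to make a claim about \emph{all} geodesics, since the exceptional set, though exponentially small, is still exponentially large and could contain all the non-Morse directions.

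The paper reverses the logic and never tries to derive hyperbolicity. Instead it \emph{uses} non-hyperbolicity as an input: by Proposition~\ref{prop:not-hyp}, a non-hyperbolic group with a contracting element contains, for any prescribed $E$, a geodesic segment $[e,f]$ that is not $E$-contracting. The heart of the proof (Proposition~\ref{prop:growthGap}) is then a Yang-style extension/insertion counting argument showing that, for any such $f$ and after buffering by a Schottky system $\{h_1,h_2,h_3\}$, the set $\mathcal{B}$ of elements \emph{not} admitting a decomposition $g = u_1 v_1 f v_2 u_2$ with aligned middle has exponentially deficient growth. Thus generically $g$ has an axis passing uniformly close to the endpoints of a translate of $[e,f]$, and by Lemma~\ref{lem:nearby} this forces $g$ not to be $K$-contracting. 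The point you almost touch in your ``Main obstacle'' paragraph is exactly this: the alignment/Schottky machinery you invoke is what the paper uses, but it is deployed to \emph{insert} a non-contracting piece into generic axes, not to extract a contracting piece from all geodesics. That sign change is what makes the argument close.
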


\begin{cor}
A non-virtually-cyclic, finitely generated group is hyperbolic if and only if there exists $D > 0$ such that $D$-contracting elements are generic. \end{cor}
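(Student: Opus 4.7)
The plan is to derive the Corollary immediately from Theorem \ref{thm:counting}, since the biconditional can be obtained by reading off the two parts of the dichotomy directly.

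For the forward implication ($G$ hyperbolic implies $D$-contracting elements are generic for some $D$), I would simply invoke part (1) of Theorem \ref{thm:counting}: it produces a constant $D > 0$ with
$$\lim_{n\to\infty} \frac{\#\{g \in B(n) : g \,\textrm{is $D$-contracting}\}}{\#B(n)} = 1,$$
which is precisely the genericity condition. The exponential rate is a bonus that is not needed here.

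For the converse, I would argue by contrapositive. Suppose $G$ is not hyperbolic; combined with the standing hypothesis that $G$ is not virtually cyclic, part (2) of Theorem \ref{thm:counting} applies and asserts that for \emph{every} $D > 0$ the proportion of $D$-contracting elements in $B(n)$ tends to $0$. In particular, no single choice of $D$ witnesses genericity, contradicting the hypothesis of this direction.

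The only real obstacle here is Theorem \ref{thm:counting} itself; the Corollary is a logical repackaging of the dichotomy, and no new ideas are required beyond matching the hypotheses on both sides (finitely generated, not virtually cyclic). All the technical content — establishing exponential convergence to $1$ in the hyperbolic case and exponential decay to $0$ for every $D$ in the non-hyperbolic case — lives in the proof of the main theorem, and once those are in hand the Corollary is a one-line deduction.
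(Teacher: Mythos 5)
Your proposal is correct and matches the paper's treatment: the Corollary is stated without proof precisely because it is the immediate logical repackaging of Theorem \ref{thm:counting}, with part (1) giving the forward implication and part (2), read contrapositively, giving the converse. No further comment is needed.
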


Claim (1) is essentially due to \cite{gekhtman2018counting}: if the group is $\delta$-hyperbolic, every loxodromic element is $D$-contracting with a $D$ that depends only on $\delta$, thus it follows from the genericity of loxodromic elements proved there. Thus, the really new part is the converse: to show that $D$-contracting elements are not generic for non-hyperbolic groups.

The presence of a constant $D$ is necessary: for instance, in $\Z^{2} \ast \Z$, contracting elements are generic but the group is not hyperbolic. In fact, if a finitely generated group has a contracting element, then contracting elements are generic with respect to both the counting measure \cite{yang2020genericity} and any admissible random walk (\cite{sisto2018contracting}, \cite[Theorem A]{choi2022randomII}). 

Our proof strategy is as follows. 
Let $G$ be a non-hyperbolic group. If the group $G$ has no contracting element at all, non-contracting elements are automatically generic in $G$. If not, fix a sufficiently long contracting geodesic segment $\gamma$. Yang proved in \cite[Theorem 2.10]{yang2020genericity} that the axis of a generic element of $G$ almost contains a translate of $\gamma$. We prove an analogous statement for an arbitrarily \emph{non-contracting} segment $\gamma'$ (see Proposition \ref{prop:growthGap}). Namely, given any geodesic segment $\gamma'$, the axis of a generic element of $G$ is nearby the endpoints of a translate of $\gamma'$ in a quantitative sense. This sounds counterintuitive at first, but it follows from the existence of a contracting segment $\gamma$.

\subsection*{Logarithm law for excursions} Finally, in Section \ref{S:exc} we apply our methods to a different problem, namely we study the statistics of maximal excursions of generic geodesics in $G$ into a coarsely geodesically connected subset $H$ (e.g. a quasiconvex subgroup). 

Excursions into cusps of a hyperbolic manifold have been studied for a long time, starting with the celebrated \emph{logarithm law} of Sullivan \cite{Sullivan}, which has been generalized to several contexts.
In the context of Cayley graphs, recently \cite{qing2021excursions} defined a general notion of excursion into a subgroup $H < G$, and established a logarithm law for right-angled Artin groups. 

Here, we generalize this approach to any finitely generated group with a contracting element. Given any geodesic $\gamma$ in the Cayley graph of $G$, define the $K$-coarse excursion of $\gamma$ into $H$ as \[\mathcal{E}_{H,K}(\gamma) := \max_{t \in G} \diam(\gamma \cap \mathcal{N}_K(tH)).\]
    Given an element $g \in G$, define the $K$-coarse excursion of $g$ into $H$, denoted by $\mathcal{E}_{H,K}(g)$, as the maximum of $\mathcal{E}_{K,H}(\gamma)$ over all geodesics from the identity to $g$.
In Theorem \ref{thm:excursions} we prove that such excursions  are generically logarithmic in the length of the word. More precisely, we show: 

\begin{thm}{\label{thm:intro-excursions}}
    Let $G$ be a finitely generated group which is not virtually cyclic, and let $H \subset G$ be a coarsely geodesically connected subset with infinite diameter. Suppose that $G$ has a contracting element which is strongly independent of $H$. Then there exist $K > 0$ such that for any $p>0$ there exists $C_1,C_2$ such that the counting measure $\mathbb{P}_n$ satisfies 
    \[\mathbb{P}_n \left(g \in G: C_1 \leq \frac{\mathcal{E}_{H,K}(g)}{\log n} \leq C_2\right) \geq 1-O(n^{-p}).\]
\end{thm}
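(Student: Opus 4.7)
The plan is to prove the upper and lower bounds on $\mathcal{E}_{H,K}(g)/\log n$ separately, each by combining a union bound over positions along a geodesic with an exponential growth-gap estimate of the kind developed earlier in the paper.

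For the upper bound $\mathcal{E}_{H,K}(g) \leq C_2 \log n$, I would first fix $K$ large enough that the coarse geodesic connectedness of $H$ forces any $K$-excursion of length at least $\ell$ to be realized by an actual geodesic sub-arc of length $\ell$ contained in $\mathcal{N}_K(tH)$ for a single translate $tH$. Any $g \in B(n)$ with $\mathcal{E}_{H,K}(g) \geq \ell$ then factors as $g = g_1 g_2 g_3$ along a geodesic with $|g_1|+|g_2|+|g_3|=|g|$, where $g_2$ is, up to translation, a length-$\ell$ geodesic arc lying in $\mathcal{N}_K(H)$. The key counting input to be established is that the number of such arcs is at most $e^{(h-c)\ell}$, where $h$ is the exponential growth rate of $G$ and $c>0$. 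This is the analogue, for the subset $H$, of the growth gap in Proposition \ref{prop:growthGap}, and it rests on the existence of a contracting element strongly independent of $H$: this contracting direction provides a definite "escape" from every coset of $H$ and hence an exponential deficit in the growth of arcs trapped near $H$. Summing $\#B(n-\ell)\cdot e^{(h-c)\ell}$ over the $O(n)$ possible positions of $|g_1|$ and dividing by $\#B(n)\asymp e^{hn}$ gives a bound of the form $O(n e^{-c\ell})$ for the measure of bad $g$'s, and choosing $\ell = C_2 \log n$ with $C_2 > (p+1)/c$ closes this direction.

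For the lower bound $\mathcal{E}_{H,K}(g) \geq C_1 \log n$, I would use that $H$ has infinite diameter and is coarsely geodesically connected to extract, for each $L$, a geodesic segment $\sigma_L$ of length $L$ contained in some uniform neighborhood of $H$ (which we also absorb into $K$). The task then reduces to showing that a generic $g \in B(n)$ has a translate of $\sigma_L$ appearing, up to controlled error, along one of its geodesics, for $L = C_1 \log n$. I would mimic the pigeonhole argument behind Yang's genericity theorem and Proposition \ref{prop:growthGap}: partition a geodesic from $e$ to $g$ into $\sim n/L$ disjoint blocks of length $L$, and use the counting estimates to show each block coincides with a translate of $\sigma_L$ with probability at least $e^{-c' L}$ for a constant $c'$ independent of $L$. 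A union bound (or, more carefully, an independence-style argument over blocks) then yields that the probability no block witnesses an excursion of length $L$ is at most $(1 - e^{-c' L})^{n/L}$, and taking $C_1 < 1/c'$ makes this $O(n^{-p})$.

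The principal technical obstacle is the upper bound, specifically the exponential growth gap for geodesic arcs lying in $\mathcal{N}_K(H)$. Without any hyperbolicity assumption on $G$, neither a well-behaved projection to $H$ nor a quantitative escape from $H$ is automatic; the hypothesis that $G$ admits a contracting element strongly independent of $H$ is exactly what powers this, and it must be carefully combined with the counting machinery underlying Proposition \ref{prop:growthGap} (particularly the "insertion of a contracting pattern" that guarantees deviation from any coset of $H$). Once the growth gap for $\mathcal{N}_K(H)$-arcs is established, both directions of the theorem reduce to union bounds over at most $n$ positions, and the dependence of $C_1, C_2$ on $p$ reflects exactly the trade-off between $\ell = C \log n$ and the required polynomial rate $n^{-p}$.
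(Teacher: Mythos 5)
Your lower bound is essentially the paper's argument: both take a geodesic segment $\sigma$ of length $\sim C_1\log n$ lying near some translate of $H$ (available by infinite diameter and coarse geodesic connectedness) and show that a translate of it generically appears along $[e,g]$. The paper does this by applying Proposition~\ref{prop:growthGap} directly with $f=f_n\in H^{-1}H$, $|f_n|\sim C_1\log n$, $I_n=[0,n]$, and $M_n=2C_1\log n$, then checking that the right-hand side decays superpolynomially once $C_1$ is small enough that $\#B(e,2C_1\log n)\ll n^\alpha$ with $\alpha<1$. Your ``probability $\ge e^{-c'L}$ per block with an independence-style union bound'' is exactly what that proposition makes rigorous, so this half is sound.

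The upper bound, however, takes a genuinely different route and contains a real gap. Your key step is a growth gap for length-$\ell$ geodesic arcs lying in $\mathcal{N}_K(tH)$, of the form $e^{(h-c)\ell}$. You correctly flag this as ``the principal technical obstacle,'' but you neither prove it nor can cite it: it is not established in the paper and does not follow from Proposition~\ref{prop:growthGap} without new work. Two further issues compound this. First, your decomposition $g=g_1g_2g_3$ presumes the subarc of $[e,g]$ realizing the excursion lies entirely in $\mathcal{N}_K(tH)$; but $\mathcal{E}_{H,K}$ only constrains the \emph{endpoints} $x,y$ of $\gamma\cap\mathcal{N}_K(tH)$, and $[x,y]\subset[e,g]$ need not stay near $tH$ (the paper instead replaces $[x,y]$ by a different geodesic $[x^*,y^*]$ in $\mathcal{N}_R(tH)$ using coarse geodesic connectedness, and compares the two via projections). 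Second, your conversion from counts to fractions uses $\#B(n)\asymp e^{hn}$, a purely-exponential-growth statement which is true here (by Yang, since $G$ has a contracting element) but is an extra ingredient you'd need to invoke. The paper avoids all of this: it fixes a contracting $f$ strongly independent of $H$ with $|f|$ large compared to $\sup_t\diam(\pi_\gamma(tH))$, applies Proposition~\ref{prop:growthGap} with a \emph{fixed} $M$ and with $I_n$ a window of length $\tfrac12 C_2\log n$, chooses $C_2$ so the proposition's lower bound exceeds $n^{p+1}$, covers $[0,n]$ by $O(n/\log n)$ such windows, and union-bounds. This shows that generically every length-$C_2\log n$ subsegment of every $[e,g]$ contains a piece $D_5$-fellow-travelling a translate of $[e,f]$, after which the projection/alignment argument (using strong independence and coarse geodesic connectedness) forbids such a subsegment from having both endpoints in $\mathcal{N}_K(tH)$. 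This is the missing mechanism your proposal would need to supply or replace.
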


This extends to general groups a result from \cite{qing2021excursions} for right-angled Artin groups and from \cite{sisto-taylor-2019} for random walks.

\subsection*{A remark on random walks} We note that a statement analogous to \cite[Theorem 2.10]{yang2020genericity} has been proven for random walks: see \cite[Theorem 3]{baik2020smallest} or \cite[Theorem A]{choi2022randomII}. Using the techniques in these references, one can establish the case (2) of Theorem \ref{thm:counting} for random walks. Together with the genericity of loxodromics in hyperbolic groups due to \cite{maher2018random}, one can complete the analogue of Theorem \ref{thm:counting} for random walks. However, we will not present the details here.

\subsection*{Acknowledgements}
We thank Abdul Zalloum for raising the question and other insights, and Ilya Gekhtman and Sam Taylor for some useful comments on the first draft.

The second author is supported by Samsung Science \& Technology Foundation (SSTF-BA1702-01 and SSTF-BA1301-51) and by a KIAS Individual Grant (SG091901) via the June E Huh Center for Mathematical Challenges at Korea Institute for Advanced Study. This work was initiated while the second author was visiting the University of Toronto, and the second author thanks the other authors for their hospitality during his stay.

The third author is partially supported by grant RGPIN-2017-06521 from NSERC and an Early Researcher Award from the Government of Ontario.

\section{Preliminaries}

We fix a group $G$ with a finite generating set $S$, endowed with the word metric $d = d_{S}$ with respect to $S$. We denote by $e$ the identity element of $G$ and $|\cdot|$ the word norm on $G$, i.e., $|g| := d_{S}(e, g)$. Moreover, if $A \subseteq G$ is a subset, we let $|A| := \sup_{g \in A} |g|$.   

Given two points $x$ and $y$ in $G$, $[x, y]$ denotes an arbitrary geodesic connecting $x$ to $y$.
Likewise, given two subsets $A$ and $B$ in $G$, $[A, B]$ denotes an arbitrary geodesic connecting a point in $A$ to a point in $B$.

Throughout, projections refer to the nearest point projection. More explicitly, given an element $y$ and a subset $A$ of $G$, let $\pi_A(y)$ be the set of points $a \in A$ such that $ d(y,a) = d(y,A)$, which exist as the space is proper. For subsets $Y$ and $Y'$ of $X$, we define \[\begin{aligned}
\pi_A(Y) &= \cup_{y\in Y} \pi_A(y), \\
d_A(Y, Y') &= \diam (\pi_{A}(Y) \cup \pi_{A}(Y')).
\end{aligned}
\]
We now recall the definition of \emph{contracting element}.

\begin{definition}\label{dfn:contracting}
Let $D > 0$ be a constant. 

A geodesic $\gamma$ in $G$ is said to be \emph{$D$-contracting} if for each geodesic $\kappa$ in $G$ with $d(\gamma, \kappa) > D$ we have $\diam(\pi_{\gamma}(\kappa)) < D$.

Given an element $g \in G$, we say that an infinite geodesic $\gamma$ is an \emph{axis} of $g$ if $\gamma$ and the orbit $\{g^{i}\}_{i \in \mathbb{Z}}$ of $g$ are within bounded Hausdorff distance. An element $g \in G$ is said to be \emph{$D$-contracting} if it has a $D$-contracting axis, and it is said to be \emph{contracting} if it is \emph{$D$-contracting}
for some $D> 0$.

Two elements $g, h \in G$ are said to be \emph{$D$-independent} if the projection of one (equivalently, all) of the axes of one element onto the other has diameter smaller than $D$.
The elements are \emph{independent} if they are $D$-independent for some $D > 0.$
\end{definition}

We now recall some facts about contracting geodesics.
Some of these facts are certainly known to the experts, but we collect the statements and some of the proofs here for convenience.

\begin{lem}\label{lem:contractingNbd}
Let $\gamma$ be a $D$-contracting geodesic, $x, y \in G$ and $[x, y]$ be a geodesic segment. If $d_{\gamma}(x, y) \ge D$, 
there exists a subsegment $[x', y']$ of $[x, y]$ 
that satisfies the following. 
\begin{enumerate}
\item $\diam(x'\cup\pi_{\gamma}(x)) < 2D$ and $\diam(y'\cup\pi_{\gamma}(y)) < 2D$.
\item  $\pi_{\gamma}([x, y])$ and $[x', y']$ are within Hausdorff distance $4D$.
\item For any $a \in \pi_{\gamma}(x)$ and $b \in \pi_{\gamma}(y)$, $[a, b]$ (chosen as a subset of $\gamma$) and $[x', y']$ are within Hausdorff distance $10D$.
\end{enumerate}
\end{lem}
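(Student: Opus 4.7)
The plan is to designate $x'$ and $y'$ as the first and last ``projection jump'' points of $[x,y]$ relative to $\gamma$. Specifically, traversing $[x,y]$ from $x$ I would let $x'$ be the first vertex $w$ with $d_\gamma(x,w) \ge D$, and traversing from $y$ I would let $y'$ be the first vertex $w$ with $d_\gamma(y,w) \ge D$. The hypothesis $d_\gamma(x,y) \ge D$ combined with the $D$-contracting property guarantees these exist, lie in the stated order along $[x,y]$, and each lies within distance $D$ of $\gamma$ itself: if say $d(x',\gamma) > D$, then applying the contracting hypothesis to the geodesic $[x,x']$ would force $\diam(\pi_{\gamma}([x,x'])) < D$, contradicting $d_\gamma(x,x') \ge D$.

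For (1), the key observation is that the vertex $x'_-$ preceding $x'$ on $[x,y]$ satisfies $d_\gamma(x, x'_-) < D$ by minimality of $x'$, so every $p \in \pi_\gamma(x)$ and $q \in \pi_\gamma(x'_-)$ are within $D$ of one another. Combined with $d(x'_-, x') \le 1$ and $d(x', \gamma) \le D$, one shows $d(x', \pi_\gamma(x)) \le 2D$ via the triangle inequality, and separately $\diam(\pi_\gamma(x)) \le 2D$ (either from the contracting property applied to the singleton geodesic when $x$ is far from $\gamma$, or from $\diam(\pi_\gamma(x)) \le 2 d(x,\gamma)$ when $x$ is close to $\gamma$). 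Together these give $\diam(x' \cup \pi_\gamma(x)) < 2D$, and the argument for $y'$ is identical.

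For (2), I would decompose $[x,y] = [x,x'] \cup [x', y'] \cup [y',y]$. On the outer pieces contracting forces every projection to stay close to $\pi_\gamma(x)$ or $\pi_\gamma(y)$, which by (1) are close to $x'$ or $y'$ respectively. On the middle piece I would argue each $z \in [x',y']$ lies within bounded distance of $\gamma$: any maximal subsegment of $[x',y']$ lying outside the $D$-neighborhood of $\gamma$ has projection of diameter $< D$ by contracting, but its endpoints are already forced by the definitions of $x'$ and $y'$ to have projections near $\pi_\gamma(x')$ and $\pi_\gamma(y')$, so such an excursion stays uniformly short. Consequently $\pi_\gamma(z)$ tracks $z$, giving Hausdorff distance $\le 4D$ between $[x',y']$ and $\pi_\gamma([x,y])$. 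Item (3) then follows by combining (1) and (2): given $a \in \pi_\gamma(x)$ and $b \in \pi_\gamma(y)$, the point $x'$ lies within $2D$ of $a$ and $y'$ within $2D$ of $b$ by (1), while by (2) the segment $[x',y']$ is $4D$-close to $\pi_\gamma([x,y])$, which coarsely fills out the subgeodesic of $\gamma$ from $a$ to $b$; the constants sum to $10D$.

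The main obstacle I anticipate is bookkeeping the constants carefully enough to extract exactly $2D$, $4D$, and $10D$ rather than generic multiples of $D$. The contracting definition only directly controls geodesics lying entirely outside the $D$-neighborhood of $\gamma$, so one has to repeatedly manage endpoint and boundary effects: the one-edge step at $x'$ and $y'$, the fact that a single point may project to $\gamma$ in a set of diameter up to $2D$ when the point lies within $D$ of $\gamma$, and the slack between ``$D$-contracting'' and ``first entry into the $D$-neighborhood.'' Beyond this careful accounting, the proof is the standard Morse-lemma-style entry--middle--exit decomposition familiar from the theory of contracting geodesics.
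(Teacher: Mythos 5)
Your definition of $x'$ differs from the paper's, and the change breaks the argument. You take $x'$ to be the first vertex $w$ on $[x,y]$ with $d_\gamma(x,w)\ge D$, and you claim $d(x',\gamma)\le D$ by ``applying the contracting hypothesis to $[x,x']$.'' But the $D$-contracting property only constrains $\pi_\gamma(\kappa)$ when the \emph{whole} geodesic $\kappa$ stays at distance $>D$ from $\gamma$; knowing $d(x',\gamma)>D$ at one endpoint does not ensure that $[x,x']$ avoids the $D$-neighborhood (the segment could dip in and out before reaching $x'$), so the hypothesis may simply not apply and no contradiction is obtained. Moreover, even if one did know $d(x',\gamma)\le D$, item (1) still does not follow: by your own definition $d_\gamma(x,x')\ge D$, so the natural triangle estimate $\diam(x'\cup\pi_\gamma(x))\le d(x',\pi_\gamma(x'))+d_\gamma(x',x)$ yields a bound of at least $2D$, not strictly less than $2D$.

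The paper instead sets $N$ to be the open $D$-neighborhood of $\gamma$, $\bar N$ its closure, $S=[x,y]\cap\bar N$, and takes $x'$, $y'$ to be the closest and farthest points of $S$ from $x$. With this choice $[x,x']$ is disjoint from $N$ by construction, so the contracting property applies directly to give $\diam(\pi_\gamma([x,x']))<D$, hence $d_\gamma(x,x')<D$, and then $\diam(x'\cup\pi_\gamma(x))\le d(x',\pi_\gamma(x'))+d_\gamma(x',x)<2D$. The two definitions look similar but have opposite effects: yours forces a large projection displacement $d_\gamma(x,x')\ge D$, while the paper's guarantees a small one. Your outline of (2) and (3)---the entry/middle/exit decomposition, coarse connectivity of the near-$\gamma$ set, and tracking the middle segment against $\gamma$---matches the paper's strategy in spirit, so substituting the paper's definition of $x',y'$ should let the rest of your argument go through.
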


\begin{proof} 
Let $N$ be the open $D$-neighborhood of $\gamma$, $\bar{N}$ be its closure and $S = [x, y] \cap \bar{N}$. $S$ is closed, and since $d_{\gamma}(x, y) \ge D$, $S$ is nonempty. Let $x'$ and $y'$ be points in $S$ that are the closest and the farthest from $x$, respectively. In other words, $d(x, x') = \inf_{s \in S} d(x, s)$, $d(x, y') = \sup_{s \in S} d(x, s)$.

Then $[x, x']$ is disjoint from $N$ so $\diam(\pi_{\gamma}([x, x'])) < D$. It follows that \[
\diam(x'\cup \pi_{\gamma}(x)) \le d(x', \pi_{\gamma}(x')) + d_{\gamma}(x', x) < 2D.
\]
Similarly, the diameter of $y' \cup \pi_{\gamma}(y)$ is smaller than $2D$.

Next, we claim that $S$ is $3D$-coarsely connected. If not, there exists a subsegment $[s, t]$ of $[x, y]$ that is longer than $3D$ and is intersecting with $\bar{N}$ exactly at $s$ and $t$. It follows that $[s, t]$ is disjoint from $N$ and $\diam(\pi_{\gamma}([s, t])) < D$. Hence \[
d(s, t) \le d(s, \gamma) + \diam(\pi_{\gamma}([s, t])) + d(\gamma, t) \le 3D,
\]
a contradiction.

Since $S$ is $3D$-coarsely connected, $S$ and $[x', y']$ are within Hausdorff distance $1.5D$.
Next, $S$ is in the $D$-neighborhood of $\pi_{\gamma}([x, y])$, as $d(s, \pi_{\gamma}(s)) \le D$ for each $s \in S$. Now given a point $u$ on $[x, y]$, let $s$ be the nearest point on $S$ from $u$. Then the geodesic segment $\kappa \subseteq [ x, y]$ connecting $s$ to $u$ is either a point (when $u \in \bar{N}$) or disjoint from $N$ (when $u \notin \bar{N}$). In the first case, $\pi_{\gamma}(u)$ is $D$-close to $u = s \in S$. In the second case, $\pi_{\gamma}(u)$ is $D$-close to $\pi_{\gamma}(s)$, and $2D$-close to $s \in S$. In conclusion, $\pi_{\gamma}([x, y])$ is in the $2D$-neighborhood of $S$, and we conclude that $[x', y']$ and $\pi_{\gamma}([x, y])$ are within Hausdorff distance $3.5D$ of each other. 

Meanwhile, since $[x', y']$ is in the $1.5D$-neighborhood of $S$, it is in the $2.5D$-neighborhood of $\gamma$. Let $f : I = [0, M] \rightarrow G$ and $g : J \subseteq \mathbb{R} \rightarrow G$ be the length parameterizations of $[x', y']$ and $\gamma$, respectively. For each $t \in [0, M]$, there exists $s_{t} \in I$ such that $d(f(t), g(s_{t}))< 2.5D$ (when there are many such $s$, just pick one). Then we have $|s_{t} - s_{t'}| = d(g(s_{t}), g(s_{t'})) \le d(f(t) , f(t')) + 5D = |t-t'| + 5D$ and similarly $|s_{t} - s_{t'}| \ge |t-t'| - 5D$. It follows that the map $t \mapsto s_{t}$ is of the form $t \mapsto \pm t + C$ up to an additive error of at most $10D$. In particular, 
 $[x', y']$ and $[a, b]$ have Hausdorff distance at most $10D$.
\end{proof}

\begin{cor}\label{cor:contractingLip}
If $\gamma$ is a $D$-contracting geodesic, then we have
$$d_{\gamma}(x, y) \le d(x, y) + 4D$$ 
for any $x, y \in G$.
\end{cor}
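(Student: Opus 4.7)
The plan is to invoke Lemma 2.2 (the contracting neighborhood lemma just proved) and reduce the estimate to a straightforward triangle inequality applied to a carefully chosen subsegment of a geodesic from $x$ to $y$. I would split into two cases based on whether $d_\gamma(x, y)$ is small or large compared to $D$.

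In the easy case, when $d_\gamma(x, y) < D$, the inequality is trivial since $d(x, y) \ge 0$ and $D \le 4D$. In the main case, when $d_\gamma(x, y) \ge D$, I would apply Lemma 2.2 to $[x, y]$. This yields a subsegment $[x', y']$ of $[x, y]$ with
\[
\diam(x' \cup \pi_\gamma(x)) < 2D \quad \text{and} \quad \diam(y' \cup \pi_\gamma(y)) < 2D.
\]
Since $[x', y']$ is a subsegment of $[x, y]$, we have $d(x', y') \le d(x, y)$. Then for any $a \in \pi_\gamma(x)$ and $b \in \pi_\gamma(y)$, the triangle inequality gives
\[
d(a, b) \le d(a, x') + d(x', y') + d(y', b) < 2D + d(x, y) + 2D = d(x, y) + 4D.
\]

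The one subtlety to address is that $d_\gamma(x, y)$ is by definition the diameter of the union $\pi_\gamma(x) \cup \pi_\gamma(y)$, not merely the distance between the two sets, so the within-set diameters must also be controlled. For this I would observe that each of $\pi_\gamma(x)$ and $\pi_\gamma(y)$ individually has diameter at most $2D$: indeed, if $d(x, \gamma) \le D$ then every point of $\pi_\gamma(x)$ is within $D$ of $x$, while if $d(x, \gamma) > D$ then the $D$-contracting property applied to the degenerate geodesic $\{x\}$ gives $\diam(\pi_\gamma(x)) < D$; the same holds for $y$. Since $2D \le d(x, y) + 4D$, combining this with the cross-distance bound yields $\diam(\pi_\gamma(x) \cup \pi_\gamma(y)) \le d(x, y) + 4D$, which is the desired inequality.

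I do not expect a genuine obstacle here: the estimate is a direct bookkeeping application of Lemma 2.2, and the only point that requires a moment's thought is the handling of the within-set diameters, which is absorbed into the $4D$ slack.
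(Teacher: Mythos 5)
Your proof is correct and is exactly the intended argument: the paper states the corollary immediately after Lemma~\ref{lem:contractingNbd} without giving an explicit proof, and the obvious route is the one you take, namely splitting on whether $d_\gamma(x,y)$ clears the threshold $D$ and then, in the main case, using part (1) of Lemma~\ref{lem:contractingNbd} together with $d(x',y')\le d(x,y)$ to get the cross-distance bound. One small redundancy: in the case $d_\gamma(x,y)\ge D$, the within-set diameters $\diam(\pi_\gamma(x))$ and $\diam(\pi_\gamma(y))$ are already bounded by $2D$ directly from Lemma~\ref{lem:contractingNbd}(1), so the separate degenerate-geodesic argument is not needed there (and in the other case the whole inequality is already trivial), but it is harmless and shows you understand where the definition's diameter of a union could cause trouble.
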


The following is a slight generalization of \cite[Proposition 2.2]{yang2020genericity}.

\begin{lem}\label{lem:nearby}
For each $D>0$ there exists $E =E(D)>D$ that satisfies the following.
Let $\gamma$ be a $D$-contracting geodesic and $x, y$ be points in the $D$-neighborhood of $\gamma$. Then any geodesic segment $[x, y]$ is $E$-contracting and lies in the $E$-neighborhood of $\gamma$.
\end{lem}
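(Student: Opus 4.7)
The plan is to establish the two claims in turn: first that $[x, y]$ lies in an $O(D)$-neighborhood of $\gamma$, and then that this closeness forces $[x, y]$ to be contracting with a constant depending only on $D$. The main tools will be the $D$-contracting property of $\gamma$ itself and Lemma 2.1.

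For the neighborhood assertion I would argue by contradiction. Suppose some $z \in [x, y]$ has $d(z, \gamma) \gg D$. Because the endpoints $x, y$ lie in $\bar N_D(\gamma)$, there is a maximal subsegment $[u, v] \subset [x, y]$ containing $z$ whose open interior is disjoint from $\bar N_D(\gamma)$; maximality forces $d(u, \gamma), d(v, \gamma) \le D + O(1)$. Since the open subsegment $(u, v)$ is a geodesic at distance greater than $D$ from $\gamma$, the contracting property gives $\diam(\pi_\gamma([u, v])) = O(D)$, and the triangle inequality then forces $d(u, v) = O(D)$. But $d(u, v) = d(u, z) + d(z, v) \ge 2(d(z, \gamma) - D)$, so $d(z, \gamma) = O(D)$. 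Hence $[x, y] \subset N_{E_0}(\gamma)$ for some $E_0 = O(D)$.

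For the contracting property I would first apply Lemma 2.1 to the segment $[x, y]$ and $\gamma$, disposing of the trivial case $d_\gamma(x, y) < D$ in which Corollary 2.2 forces $d(x, y) = O(D)$ and $[x, y]$ is contracting on its own scale. Otherwise Lemma 2.1 shows that $[x, y]$ is $O(D)$-Hausdorff-close to the sub-geodesic $\bar\alpha := [\pi_\gamma(x), \pi_\gamma(y)] \subset \gamma$. Next let $\kappa$ be a geodesic with $d(\kappa, [x, y]) > E$ for $E$ large. Since $[x, y] \subset N_{E_0}(\gamma)$ we get $d(\kappa, \gamma) > E - E_0 > D$, hence $\diam(\pi_\gamma(\kappa)) < D$. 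For each $p \in \kappa$, set $q = \pi_{[x,y]}(p)$, pick $r \in \pi_\gamma(p)$ and $\bar q \in \pi_\gamma(q)$. A second application of Lemma 2.1, now to $[p, q]$ and $\gamma$, controls the near-isosceles triangle $p, r, \bar q$: in the generic case $r \in \bar\alpha$ one extracts $d(r, \bar q) = O(D)$ and hence $d(q, r) \le d(q, \bar q) + d(\bar q, r) = O(D)$, so all $q$'s across $p \in \kappa$ lie within $O(D)$ of a single $r$.

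The step I expect to be the main obstacle is the boundary case in which $r$ lies outside $\bar\alpha$; here $q$ need not be close to $r$. The resolution I have in mind is that in this regime $q$ is instead forced near the endpoint of $[x, y]$ on the appropriate side: comparing $d(p, q) = d(p, [x, y])$ with $d(p, x)$ (or $d(p, y)$) and applying Lemma 2.1 once more will show that $q$ cannot travel far past the relevant endpoint, since otherwise the geodesic $[p, q]$ would be strictly longer than the competing path through $x$. Since $\diam(\pi_\gamma(\kappa)) < D$, the projections $r = \pi_\gamma(p)$ for $p \in \kappa$ cannot straddle both endpoints of $\bar\alpha$ unless $\bar\alpha$ (hence $[x, y]$) is already short, which is a case handled by the trivial bound. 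Combining the cases yields $\diam(\pi_{[x,y]}(\kappa)) = O(D)$, and choosing some $E = O(D)$ finishes the proof.
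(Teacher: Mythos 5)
Your route is genuinely different from the paper's, and it is worth comparing. The paper keeps its proof short by outsourcing the two hard steps to cited results: it replaces $\gamma$ by the finite subsegment $[x^*, y^*]$ with $x^*, y^*$ within $D$ of $x,y$, uses \cite[Prop.~2.2(3)]{yang2020genericity} to say that this subsegment is again contracting, uses Lemma~\ref{lem:contractingNbd} to get that $[x,y]$ and $[x^*, y^*]$ are within Hausdorff distance $O(D)$, and then invokes \cite[Lemma~2.15]{arzhantseva2015growth} (equivalently \cite[Prop.~2.2(2)]{yang2020genericity}): a geodesic at bounded Hausdorff distance from a contracting geodesic is contracting. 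You instead try to verify the contracting property of $[x,y]$ from scratch, by comparing $\pi_{[x,y]}$ to $\pi_\gamma$ pointwise along a far-away $\kappa$. This is more elementary and self-contained, which is a genuine benefit, but it means you are re-proving the Hausdorff-stability lemma, and the cost is that you must handle the projection-comparison carefully, including the boundary case.

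Your first half (the $O(D)$-neighborhood claim) is clean and correct. In the second half there are two things to tighten. First, the appeal to Corollary~\ref{cor:contractingLip} in the trivial case is the wrong direction: that corollary gives $d_\gamma(x,y) \le d(x,y)+4D$, which does not control $d(x,y)$. What you want is just the triangle inequality: $d(x,y)\le d(x,\gamma)+d_\gamma(x,y)+d(\gamma,y)\le 3D$. Second, and more substantively, the boundary case ($r=\pi_\gamma(p)$ falling outside $\bar\alpha=[\pi_\gamma(x),\pi_\gamma(y)]$) is only sketched. The idea you state does work, but you should make the mechanism explicit: if $r$ lies beyond $\pi_\gamma(x)$ and $d(q,x)$ were large, then $d_\gamma(p,q)\ge D$, so Lemma~\ref{lem:contractingNbd} applied to $[p,q]$ produces a subsegment of $[p,q]$ that fellow-travels $[r,\bar q]\supset \{\pi_\gamma(x)\}$; hence $[p,q]$ passes within $O(D)$ of $x$, and the geodesic identity $d(p,q)=d(p,\text{that point})+d(\text{that point},q)$ together with $d(p,q)=d(p,[x,y])\le d(p,x)$ forces $d(q,x)=O(D)$. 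With that spelled out, all $q$'s land within $O(D)$ of a single point (either a common $r$ or the relevant endpoint), and your argument goes through. As a plan it is correct; as written it leaves the reader to supply exactly the step the paper chose to cite rather than re-derive.
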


\begin{proof}
For some appropriate constants $E_{1} = E_{1}(D)$, $E_{2} = E_{2}(D, E_{1})$ the following argument holds.

Let $x^{\ast}, y^{\ast}$ be points on $\gamma$ such that $d(x, x^{\ast}) < D$ and $d(y, y^{\ast}) < D$. Then $[x^{\ast}, y^{\ast}]$ is a subsegment of a $D$-contracting geodesic and hence $E_{1}$-contracting (\cite[Proposition 2.2 (3)]{yang2020genericity}). 

We now claim that $[x^{\ast}, y^{\ast}]$ and $[x, y]$ are within Hausdorff distance $3D + 12E_{1}$. By Lemma \ref{lem:contractingNbd}, there exists a subsegment $[x', y']$ of $[x, y]$ for $\gamma = [x^{\ast}, y^{\ast}]$ as in Lemma \ref{lem:contractingNbd}. Note that \[
d(x, x') \le d(x, \pi_{\gamma}(x)) + \diam(\pi_{\gamma}(x)\cup x') \le d(x, \gamma) + 2E_{1} \le D + 2E_{1}
\]
and similarly $d(y, y') \le D+ 2E_{1}$. Hence, $[x, y]$ is within Hausdorff distance $D+2E_{1}$ from $[x', y']$ and within Hausdorff distance $D + 2E_{1} + 10E_{1}$ from $[\pi_{\gamma}(x), \pi_{\gamma}(y)]$. Since $\diam(\pi_{\gamma}(x) \cup x^{\ast}) \le 2D$ and $\diam(\pi_{\gamma}(y) \cup y^{\ast}) \le 2D$, we obtain the claim.

Now, since $[x, y]$ is within Hausdorff distance $3D + 12E_{1}$ from an $E_{1}$-contracting geodesic, it is $E_{2}$-contracting (\cite[Lemma 2.15]{arzhantseva2015growth}, \cite[Proposition 2.2 (2)]{yang2020genericity}).
\end{proof}

\begin{definition}[Alignment]\label{dfn:alignment}
For each $i = 1, \ldots, n$, let $x_{i}$ and $y_{i}$ be elements of $G$ and $\kappa_{i}$ be a path connecting $x_{i}$ to $y_{i}$. We say that $(\kappa_{1}, \ldots, \kappa_{n})$ is $C$-aligned if \[
d_{\kappa_{i}}(y_{i}, \kappa_{i+1}) < C,\quad d_{\kappa_{i+1}}\left(x_{i+1}, \kappa_{i}\right) < C
\]
hold for $i=1, \ldots, n-1$.
\end{definition}

Note that the above definition still makes sense if $\kappa_1$ and $\kappa_n$ are points, which we can think of as degenerate segments whose endpoints coincide. 

Given two geodesics $\gamma = [x, y]$ and $\kappa = [x', y']$, we say that $\gamma$ and $\kappa$ are \emph{$D$-fellow traveling} if $d(x, x') < D$, $d(y, y') < D$ and the Hausdorff distance between the two geodesics is smaller than $D$.

\begin{lem}[{\cite[Lemma 3.3]{choi2022random}}] \label{lem:1segment}
For each $D>0$, there exists $E = E(D)>D$ such that the following holds.

Let $\kappa, \gamma$ be $K$-contracting geodesics that connect $x$ to $y$ and $x'$ to $y'$, respectively. Suppose that $(\kappa, x')$ and $(x, \gamma)$ are $D$-aligned. Then $(\kappa, \gamma)$ is $E$-aligned. 
\end{lem}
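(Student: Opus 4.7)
The plan is to verify the two conditions defining $E$-alignment of $(\kappa, \gamma)$, namely $d_\kappa(y, \gamma) < E$ and $d_\gamma(x', \kappa) < E$. The two arguments are symmetric (reversing orientations and swapping the roles of $\kappa$ and $\gamma$ interchanges both the hypotheses and the two conclusions), so I focus on $d_\kappa(y, \gamma) < E$.

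First I reduce this condition to controlling the projections of the two endpoints of $\gamma$. Since $\gamma = [x', y']$ is a geodesic, applying Lemma \ref{lem:contractingNbd} with the contracting geodesic $\kappa$ and the geodesic $\gamma$ (via parts (2)--(3)) shows that $\pi_\kappa(\gamma)$ lies within Hausdorff distance $O(D)$ of the subsegment of $\kappa$ from $\pi_\kappa(x')$ to $\pi_\kappa(y')$; hence $\diam(\pi_\kappa(\gamma)) \leq d_\kappa(x', y') + O(D)$. Combined with the hypothesis $d(\pi_\kappa(x'), y) < D$, this reduces the bound on $d_\kappa(y, \gamma)$ to a bound on $d(y, \pi_\kappa(y'))$.

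To control $\pi_\kappa(y')$, I consider the geodesic $\alpha = [x, y']$ joining the two ``far endpoints''. Using the hypothesis $\pi_\gamma(x)$ within $D$ of $x'$ together with Corollary \ref{cor:contractingLip}, I obtain
\[
d(x, y') \geq d_\gamma(x, y') - O(D) = d(\pi_\gamma(x), y') - O(D) \geq d(x', y') - O(D),
\]
so $d_\gamma(x, y')$ is comparable to $d(x', y')$. Applying Lemma \ref{lem:contractingNbd} to $\gamma$ and $\alpha$, I extract a subsegment of $\alpha$ fellow-traveling $\gamma$ of length $\approx d(x', y')$, whose endpoints are close to $x'$ and $y'$ respectively. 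Thus $\alpha$ is a coarse concatenation of a prefix from $x$ to near $x'$ followed by a copy of $\gamma$. A symmetric application, using $d(\pi_\kappa(x'), y) < D$ and Corollary \ref{cor:contractingLip} for $\kappa$, shows $\alpha$ is likewise a coarse concatenation of a copy of $\kappa$ followed by a suffix from near $y$ to $y'$. Comparing the two descriptions, the ``pivot'' endpoints $y$ and $x'$ sit at essentially the same position along $\alpha$ (within an error controlled by $D$ and $K$), which by the coarse Lipschitz property of nearest-point projection forces $\pi_\kappa(y')$ to be within $O(D)$ of $y$.

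The main obstacle is the final bookkeeping step combining the two coarse-concatenation descriptions of $\alpha$: one must propagate the additive errors through several applications of the contracting estimates and verify that the resulting $E$ depends only on $D$ and the contracting constant $K$, not on $d(x, y)$, $d(x', y')$, or the auxiliary distances arising in the argument. This amounts to careful tracking of the explicit constants in Lemma \ref{lem:contractingNbd} (the $2D$, $4D$, $10D$ Hausdorff-distance bounds) and the $4D$ additive error of Corollary \ref{cor:contractingLip}, and may require a short bootstrap to eliminate any residual length-dependence.
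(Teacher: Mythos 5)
The paper does not prove this lemma but cites it directly from \cite[Lemma 3.3]{choi2022random}, so there is no in-paper argument to compare against; I will instead assess the logic of your sketch. The opening symmetry claim is incorrect. The only transformation that exchanges the two target inequalities $d_\kappa(y,\gamma)<E$ and $d_\gamma(x',\kappa)<E$ is ``reverse both segments and swap them'' (so $\tilde\kappa=[y',x']$, $\tilde\gamma=[y,x]$). Under this transformation, the hypotheses $(\kappa,x')$ and $(x,\gamma)$ being $D$-aligned become the conditions $d_\gamma(x',y)<D$ and $d_\kappa(y,y')<D$ — but these are \emph{not} the given hypotheses; in fact they are precisely special cases of the two conclusions you are trying to prove. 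So the argument you give addresses at most one of the two alignment inequalities, and a genuinely separate argument is needed for $d_\gamma(x',\kappa)<E$ (for instance, after establishing $d_\kappa(y,\gamma)<E$ one can run a contracting-neighborhood argument on $\gamma$ and $\kappa$ using the observation that if $\kappa$ comes within $K$ of $\gamma$ at $q\in\gamma$ then $\pi_\kappa(q)$ lies near $y$, so $q$ lies near $y$, and one bootstraps from there).

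There is also a gap in the way you compare the two decompositions of $\alpha=[x,y']$. Lemma \ref{lem:contractingNbd} applied to $\kappa$ and $\alpha$ produces a subsegment of $\alpha$ that fellow-travels $[\pi_\kappa(x),\pi_\kappa(y')]=[x,z]$ with $z=\pi_\kappa(y')$ \emph{a priori unknown}; asserting that this is ``a copy of $\kappa$ followed by a suffix from near $y$'' is exactly the claim $z\approx y$ that you are trying to prove, so the statement is circular as written. Likewise, the ``pivot endpoints $y$ and $x'$ sit at essentially the same position along $\alpha$'' is false in general: if $x'$ is far from $\kappa$ (which the hypotheses permit), the point $a_1\in\alpha$ near $\pi_\gamma(x)\approx x'$ sits at arclength roughly $d(x,y)+d(x',\kappa)$, while the farthest point $b_2$ of $\alpha$ in the closed $K$-neighborhood of $\kappa$ sits at arclength $\le d(x,y)+O(K)$; the coarse Lipschitz bound of Corollary \ref{cor:contractingLip} alone cannot bridge this. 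What does close the argument is the contracting property itself: since $b_2$ is by construction the last point of $\alpha$ in $\overline{N}_K(\kappa)$, if $a_1>b_2$ on $\alpha$ then $\diam\pi_\kappa((b_2,a_1])<K$, giving $\pi_\kappa(a_1)\approx\pi_\kappa(b_2)\approx z$, and combined with $\pi_\kappa(a_1)\approx y$ (from $a_1\approx x'$, the hypothesis $d_\kappa(y,x')<D$, and Corollary \ref{cor:contractingLip}) this yields $z\approx y$; and if $a_1\le b_2$, the position bounds force $d(a_1,b_2)=O(K+D)$ and the same conclusion follows. With this replacement the strategy via $\alpha=[x,y']$ and Lemma \ref{lem:contractingNbd} does go through for the first inequality, but as stated your proof has two real gaps rather than merely unfinished bookkeeping.
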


\begin{lem}[{\cite[Proposition 2.7.(3)]{yang2019statistically}}]\label{lem:concat}
For each $D>0$, there exist $E = E(D) > D$ and $L = L(D) > D$ that satisfy the following.

Let $x, y \in G$ and $\kappa_{1}, \ldots, \kappa_{N}$ be $D$-contracting geodesics longer than $L$. If $(x, \kappa_{1}, \ldots, \kappa_{N}, y)$ is $D$-aligned, then any geodesic segment $[x, y]$ contains subsegments $[x_{1}, y_{1}]$, $\ldots$, $[x_{N}, y_{N}]$, in order from left to right, such that $[x_{i}, y_{i}]$ and $\kappa_{i}$ $E$-fellow travel.
\end{lem}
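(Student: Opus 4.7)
I would proceed by induction on the chain length $N$, using Lemma \ref{lem:contractingNbd} to peel off the first subsegment and then applying the inductive hypothesis to the tail chain.

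For the base case $N = 1$, the $D$-alignment of $(x, \kappa_1, y)$ places $\pi_{\kappa_1}(x)$ and $\pi_{\kappa_1}(y)$ within $D$ of the initial and terminal endpoints of $\kappa_1$, respectively. Hence $d_{\kappa_1}(x, y) \geq |\kappa_1| - 2D > D$ whenever $L \geq 3D$, so Lemma \ref{lem:contractingNbd} produces a subsegment $[x_1, y_1] \subseteq [x, y]$ within Hausdorff distance $10D$ of $[\pi_{\kappa_1}(x), \pi_{\kappa_1}(y)] \subseteq \kappa_1$, and therefore $E$-fellow traveling $\kappa_1$ with $E = 11D$. For the inductive step, the extracted endpoint $y_1$ then lies within $E$ of the terminal endpoint of $\kappa_1$; the tail subgeodesic $[y_1, y] \subseteq [x, y]$, together with the shortened chain $(\kappa_2, \ldots, \kappa_N)$, satisfies an analogous alignment hypothesis at some enlarged constant $D' = D'(D)$. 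The only new condition to check is that $(y_1, \kappa_2)$ is aligned, and this follows from the original $(\kappa_1, \kappa_2)$ alignment plus the proximity of $y_1$ to the endpoint of $\kappa_1$, via Lemma \ref{lem:1segment} (using Lemma \ref{lem:nearby} to place $y_1$ in a controlled neighborhood of $\kappa_1$). Applying the inductive hypothesis at constant $D'$ and length threshold $L \geq L(D')$ then yields the remaining subsegments of $[y_1, y]$ fellow-traveling $\kappa_2, \ldots, \kappa_N$ in their correct order.

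\textbf{Main obstacle.} The delicate point is to ensure that the alignment constant $D'$ does not blow up through the induction, which would make the fellow-traveling constant $E$ and the length threshold $L$ depend on $N$. A naive iteration would produce a tower $D \to D' \to D'' \to \cdots$ growing with the chain length. The resolution is that these constants stabilize at a single value depending only on the original $D$: the perturbation from $x$ to $y_i$ is bounded by the fellow-traveling constant $E(D)$, and feeding this perturbation through Lemma \ref{lem:1segment} produces an alignment whose constant is controlled purely by the contracting gauge, not by the input alignment constant, once the latter is above a threshold (this is the Behrstock-type content of Lemma \ref{lem:1segment}). Choosing $L$ at the outset in terms of the stabilized $D'$ guarantees that the base case hypothesis $d_{\kappa_i}(\cdot,\cdot) > D'$ is available at every level of the induction, closing the argument with a uniform pair $(E, L)$ depending only on $D$.
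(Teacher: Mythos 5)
This lemma is cited from \cite[Proposition~2.7(3)]{yang2019statistically}; the paper does not supply its own proof, so there is no in-paper argument to compare against, and the proposal must be judged on its own terms.

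Your peel-off induction has a genuine gap at the very first extraction step when $N \geq 2$. To apply Lemma \ref{lem:contractingNbd} to $\kappa_1$ and $[x,y]$ and obtain a subsegment $[x_1,y_1]$ that fellow-travels \emph{all} of $\kappa_1$, you need both $\pi_{\kappa_1}(x)$ near the initial endpoint of $\kappa_1$ \emph{and} $\pi_{\kappa_1}(y)$ near the terminal endpoint of $\kappa_1$; in particular you need $d_{\kappa_1}(x,y) \geq D$, and even then Lemma \ref{lem:contractingNbd}(3) only produces a subsegment fellow-traveling $[\pi_{\kappa_1}(x), \pi_{\kappa_1}(y)]$, which equals essentially all of $\kappa_1$ only if both projections land near the two ends. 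The alignment hypothesis gives $\pi_{\kappa_1}(x)$ near the start directly, but for the far end it only gives that $\pi_{\kappa_1}(\kappa_2)$ is near the terminal endpoint. Nothing in the stated hypotheses directly controls $\pi_{\kappa_1}(y)$: for that you need a chain-propagation (Behrstock-type) argument showing that because $y$ ``lies beyond'' $\kappa_2, \ldots, \kappa_N$ in the aligned chain, its projection onto $\kappa_1$ is forced near the terminal end, with a constant uniform in $N$. This backward propagation along the chain is exactly the nontrivial content of Yang's lemma, and it is absent from your sketch; Lemma \ref{lem:1segment} as stated relates two geodesics whose endpoints are aligned, which is not the same as projecting a single far-away point $y$ through a chain of intermediate segments.

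Your observation about stabilizing the constants so that $E$ and $L$ do not grow with $N$ is a real concern and you handle it in roughly the right spirit (the coarse Lipschitz bound of Corollary \ref{cor:contractingLip} gives an additive perturbation depending only on the contracting gauge $D$, not on the running alignment constant). But that analysis presupposes that the extraction at each stage goes through, which it does not without the chain-propagation step. You would need to first prove, by a separate downward induction using the contracting property of each $\kappa_i$, that $\pi_{\kappa_i}(y)$ (respectively $\pi_{\kappa_i}(x)$) is within a uniform constant of the terminal (respectively initial) endpoint of $\kappa_i$ for every $i$, and only then run your peel-off argument.
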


The following converse of Lemma \ref{lem:concat} is a direct consequence of the triangle inequality, so we omit the proof.

\begin{lem}\label{lem:concatConverse}
For each $D>0$, there exist $E = E(D) > D$ that satisfies the following.
Let $x, y \in G$ and $\kappa_{1}, \ldots, \kappa_{N}$ be geodesics. If the geodesic segment $[x, y]$ contains subsegments $[x_{1}, y_{1}]$, $\ldots$, $[x_{N}, y_{N}]$, in order from left to right, such that $[x_{i}, y_{i}]$ and $\kappa_{i}$ $D$-fellow travel, then $(x, \kappa_{1}, \ldots, \kappa_{N}, y)$ is $E$-aligned.
\end{lem}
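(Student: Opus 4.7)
The plan is to reduce the required inequalities to the minimality of nearest-point projections together with additivity of distance along the geodesic $[x,y]$. Parameterise $[x,y]$ by arc length and write $t_z := d(x,z)$ for $z \in [x,y]$. Let $a_i, b_i$ denote the endpoints of $\kappa_i$; fellow-travelling with $[x_i, y_i]$ gives $d(x_i, a_i), d(y_i, b_i) \le D$, and every point of $\kappa_i$ admits a $D$-close companion in $[x_i, y_i]$.

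For the interior alignment, fix $i \in \{1, \ldots, N-1\}$ and any $q \in \kappa_{i+1}$. Pick a $D$-companion $q' \in [x_{i+1}, y_{i+1}]$, let $p := \pi_{\kappa_i}(q)$, and pick a $D$-companion $p' \in [x_i, y_i]$. Since $b_i \in \kappa_i$, projection minimality $d(q,p) \le d(q,b_i)$ together with the triangle inequality gives
\[
d(q,p) \le d(q,q') + d(q',y_i) + d(y_i,b_i) \le (t_{q'} - t_{y_i}) + 2D,
\]
using that $q'$ lies past $y_i$ on $[x,y]$. Since $p'$ lies before $y_i$ on $[x,y]$, additivity of distance along the geodesic gives
\[
d(q,p) \ge d(q',p') - 2D = (t_{q'} - t_{p'}) - 2D.
\]
Combining, $t_{y_i} - t_{p'} \le 4D$, so $d(p', y_i) \le 4D$ and hence $d(p, b_i) \le 6D$. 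Since this holds for every $q \in \kappa_{i+1}$ and $b_i$ projects to itself, we obtain $d_{\kappa_i}(b_i, \kappa_{i+1}) \le 12D$. The symmetric estimate $d_{\kappa_{i+1}}(a_{i+1}, \kappa_i) \le 12D$ follows by swapping the roles of $i$ and $i+1$ (and of $b_i$ with $a_{i+1}$).

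The boundary conditions, corresponding to the degenerate segments $\{x\}$ and $\{y\}$, are trivially zero when the target of the projection is a singleton. The remaining ones reduce to the same kind of bound: for example, letting $p := \pi_{\kappa_1}(x)$ with $D$-companion $p' \in [x_1, y_1]$, the inequality $d(x,p) \le d(x,a_1) \le t_{x_1} + D$ together with $d(x,p) \ge t_{p'} - D$ forces $t_{p'} - t_{x_1} \le 2D$, hence $d(p, a_1) \le 4D$. Setting $E := 12D$ completes the proof. The argument uses only elementary triangle inequalities plus the additivity of distance along the geodesic $[x,y]$, matching the authors' remark that this is a direct consequence of the triangle inequality; the only bookkeeping is tracking which of $p', q', x_i, y_i$ precedes which along $[x,y]$, which follows from the subsegments appearing in order $i = 1, \ldots, N$.
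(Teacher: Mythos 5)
Your proof is correct. The paper omits the argument, remarking only that it is a direct consequence of the triangle inequality, and your execution matches that spirit precisely: you combine the minimality of the nearest-point projection (to bound $d(q,p)$ from above by $d(q,b_i)$), $D$-fellow-travelling (to pass between $\kappa_i$ and $[x_i,y_i]$), and additivity of distance along the geodesic $[x,y]$ (to turn these bounds into $t_{y_i} - t_{p'} \le 4D$). The constant-tracking is right: the interior pairs give $d(p,b_i)\le 6D$ for every $p\in\pi_{\kappa_i}(\kappa_{i+1})$, hence $d_{\kappa_i}(b_i,\kappa_{i+1})\le 12D$, and the boundary pairs give $\le 8D$. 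The only cosmetic issue is that the alignment definition uses a strict inequality, so one should take (say) $E = 12D + 1$ rather than $E = 12D$; this is immaterial.
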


\begin{cor}\label{cor:contractingHereditary}
For each $D>0$ there exists a constant $E = E(D)>D$ that satisfies the following.
Let $x, x', y', y$ be points on a geodesic $\kappa$, in order from left to right, and $\gamma$ be a $K$-contracting geodesic. If $(x', \gamma, y')$ is $D$-aligned, then $(x, \gamma, y)$ is $E$-aligned.
\end{cor}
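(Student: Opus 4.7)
My plan is to deduce the corollary as a direct consequence of Lemmas \ref{lem:concat} and \ref{lem:concatConverse}. The guiding idea: $D$-alignment of $(x', \gamma, y')$ forces any geodesic from $x'$ to $y'$ to fellow-travel $\gamma$ along a substantial subsegment; in our setting, the most convenient such geodesic is the subsegment of $\kappa$ between $x'$ and $y'$, which, by the ordering $x \prec x' \prec y' \prec y$, is simultaneously a subsegment of $[x, y] = \kappa$. Converting this fellow-traveling back into an alignment statement for the wider pair $(x, \gamma, y)$ is exactly the content of Lemma \ref{lem:concatConverse}.

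Concretely, after replacing $D$ by $D' := \max(D, K)$ (no loss of generality, since $D$-alignment is $D'$-alignment and $K$-contracting is $D'$-contracting), I would let $L$ and $E_1$ be the two constants produced by Lemma \ref{lem:concat} applied with constant $D'$. First I would dispose of the trivial case $|\gamma| < L$: writing $a, b$ for the endpoints of $\gamma$, one has $\diam(\{a\} \cup \pi_\gamma(x)) \le |\gamma| < L$ (and similarly for $y$) since $\pi_\gamma(x) \subseteq \gamma$, while the remaining two projection diameters in the definition of alignment vanish because the projections are onto the singletons $\{x\}$ and $\{y\}$. Hence $(x, \gamma, y)$ is automatically $L$-aligned.

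In the main case $|\gamma| \ge L$, I would invoke Lemma \ref{lem:concat} with $N = 1$, $\kappa_1 = \gamma$, taking the ambient geodesic $[x', y']$ to be the subsegment of $\kappa$ between $x'$ and $y'$; this is a legitimate choice since any subsegment of a geodesic is itself a geodesic. The lemma returns a subsegment $[s, t] \subseteq [x', y']$ that $E_1$-fellow travels $\gamma$. Because $x, x', y', y$ appear in that order along $\kappa$, this $[s, t]$ is simultaneously a subsegment of $[x, y] = \kappa$ with the correct left-to-right ordering. Feeding this data into Lemma \ref{lem:concatConverse} (with $N = 1$ and the same $\kappa_1 = \gamma$) then yields that $(x, \gamma, y)$ is $E_2$-aligned, where $E_2 = E_2(E_1)$ is the constant it provides. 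Setting $E := \max(L, E_2)$ completes the plan.

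I do not anticipate any real obstacle; this is a pure bookkeeping corollary of the two preceding lemmas, which is presumably why the authors state it without proof. The only substantive check is that the ``any geodesic $[x', y']$'' quantifier in Lemma \ref{lem:concat} is consistent with choosing that geodesic inside the prescribed ambient geodesic $\kappa$, which it is.
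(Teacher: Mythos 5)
Your proof is correct and follows essentially the same route as the paper: split on whether $|\gamma| < L$, apply Lemma \ref{lem:concat} to $[x',y']$ to get a fellow-traveling subsegment, observe it is also a subsegment of $[x,y]=\kappa$, and feed it to Lemma \ref{lem:concatConverse}. Your $\max(D,K)$ replacement is a sensible way to reconcile the mismatch between the ``$K$-contracting'' hypothesis and the $D$-dependent constants in the lemmas, a point the paper glosses over.
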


\begin{proof}
Let $E_{1} = E(D)$ and $L_{1} = L(D)$ be as in Lemma \ref{lem:concat}, and let $E_{2} = E(E_{1})$ be as in Lemma \ref{lem:concatConverse}. Finally let $E = E_{2} + L$.

If $\gamma$ is shorter than $L$, then $(x, \gamma, y)$ is automatically $L$-aligned. If not, we first apply Lemma \ref{lem:concat} and deduce that $[x', y']$ contains a subsegment that $E_{1}$-fellow travels with $\gamma$. This is still true for $[x, y]$, and Lemma \ref{lem:concatConverse} tells us that $(x, \gamma, y)$ is $E_{2}$-aligned as desired.
\end{proof}

\section{Counting}

Note that there are non-hyperbolic groups that have no contracting element at all, e.g., $\Z^{n}$ for $n \ge 2$. Since for those Theorem \ref{thm:counting} is trivial, we focus on groups containing at least a contracting element. 

The following lemma is well-known \cite[Lemma 2.12]{yang2019statistically}.

\begin{lem}\label{lem:indepContracting} \label{lem:exponential}
Let $h \in G$ be a contracting element and $E(h)$ be the collection of elements $g \in G$ such that $ghg^{-1}$ and $h$ are not independent. 
Then $E(h)$ is a subgroup of $G$ that is a finite extension of $\langle h \rangle$.

As a consequence, if $G$ is a non-virtually cyclic, finitely generated group with a contracting element $h \in G$, then it contains at least three (in fact, arbitrarily many) pairwise independent contracting elements.
\end{lem}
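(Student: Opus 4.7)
The plan is to first identify $E(h)$ with the coarse stabilizer of a contracting axis $\gamma$ of $h$, and then to bound the index $[E(h):\langle h\rangle]$ by a direct ball-counting argument.

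The preliminary step is a dichotomy for $D$-contracting geodesic lines $\gamma_1,\gamma_2$ in $G$: either $\pi_{\gamma_1}(\gamma_2)$ is bounded, or $\gamma_1,\gamma_2$ lie within Hausdorff distance at most some $R_0=R_0(D)$. The nontrivial direction follows from Lemma \ref{lem:contractingNbd}: an unbounded projection yields arbitrarily long fellow-travelling subsegments of $\gamma_1$ and $\gamma_2$, and the contracting property then propagates the fellow-travelling to the entire lines with a bound depending only on $D$. Applied to the axis $\gamma$ of $h$ and the axis $g\cdot\gamma$ of $ghg^{-1}$ (both $D$-contracting for the same $D$, since $G$ acts by isometries on $\Cay(G,S)$), membership in $E(h)$ becomes the condition that $g\cdot\gamma$ and $\gamma$ lie within Hausdorff distance $R_0$. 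Closure of $E(h)$ under products and inverses is then immediate from the isometric action, since finite Hausdorff distance is preserved by $G$.

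Next I would bound the index directly. Since $e=h^0$ is within distance $C_h$ of $\gamma$, where $C_h$ bounds the Hausdorff distance between $\gamma$ and the $h$-orbit, for $g\in E(h)$ we have $g\in\mathcal{N}_{C_h}(g\cdot\gamma)\subseteq\mathcal{N}_{C_h+R_0}(\gamma)\subseteq\mathcal{N}_{2C_h+R_0}(\{h^n:n\in\mathbb{Z}\})$; pick $n(g)\in\mathbb{Z}$ achieving this distance. Then $|h^{-n(g)}g|\le 2C_h+R_0$, so $h^{-n(g)}g$ lies in the finite ball $B(2C_h+R_0)$, and choosing one such representative per right coset of $\langle h\rangle$ in $E(h)$ yields an injection $\langle h\rangle\backslash E(h)\hookrightarrow B(2C_h+R_0)$, hence $[E(h):\langle h\rangle]\le|B(2C_h+R_0)|<\infty$. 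The hard part of the whole argument is the uniform bound $R_0$ in the dichotomy; this is where the contracting property does its main work, converting the qualitative ``finite Hausdorff distance'' statement into a quantitative one.

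For the final claim, since $E(h)$ is virtually cyclic while $G$ is not, $E(h)$ has infinite index in $G$, so we pick representatives $g_0=e,g_1,g_2,\ldots$ of distinct left cosets of $E(h)$. For any $i\ne j$, $g_j^{-1}g_i\notin E(h)$, so by the first paragraph the axes $g_i\cdot\gamma$ and $g_j\cdot\gamma$ of $g_ihg_i^{-1}$ and $g_jhg_j^{-1}$ project onto each other with bounded diameter, meaning these contracting conjugates are independent. Any $N\ge 3$ of them furnish the required pairwise independent contracting elements.
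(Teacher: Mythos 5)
The paper does not prove this lemma; it simply cites \cite[Lemma 2.12]{yang2019statistically}, so there is no paper proof to compare against. Evaluating your proposal on its own terms, there is a genuine gap in the preliminary dichotomy.

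You claim that for two $D$-contracting geodesic \emph{lines} $\gamma_1, \gamma_2$, an unbounded projection $\pi_{\gamma_1}(\gamma_2)$ forces the lines to lie within uniformly bounded Hausdorff distance, and that this ``follows from Lemma \ref{lem:contractingNbd}'' plus propagation of fellow-travelling. This is false for general contracting lines: two lines that are asymptotic on one side but diverge on the other satisfy the hypothesis but not the conclusion. For a concrete Cayley-graph example, take $G = F_2 = \langle a, b\rangle$, $\gamma_1 = \{a^n : n \in \Z\}$ and $\gamma_2 = \{\ldots, b^2, b, e, a, a^2, \ldots\}$; both are uniformly contracting (hyperbolicity), $\pi_{\gamma_1}(\gamma_2)$ contains the entire positive half of $\gamma_1$, yet $d(b^n, \gamma_1) = n \to \infty$. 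What the contraction property actually gives you, via Lemma \ref{lem:contractingNbd} together with Lemma \ref{lem:nearby}, is arbitrarily long fellow-travelling sub\emph{segments}, i.e.\ asymptoticity on at least one half-line; it does not propagate to the whole lines. Your index-bounding step then fails as written, since $g = g\cdot e$ may sit near the diverging half of $g\gamma$ rather than the asymptotic one, so $g \in \mathcal{N}_{C_h + R_0}(\gamma)$ is not justified.

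The repair is to use the group action, not just the coarse geometry. Here $\gamma_2 = g\gamma$ is the axis of $ghg^{-1}$, which has the \emph{same} translation length as $h$. If $\gamma$ and $g\gamma$ are asymptotic toward a common end $\xi$ with both $h$ and $h' = ghg^{-1}$ translating toward $\xi$ (up to replacing by inverses), then for a fixed basepoint $p$ the words $(h')^{-n}h^n p$ stay in a bounded set; by local finiteness of $G$, two of them coincide, giving $h^k = (h')^k = g h^k g^{-1}$ for some $k > 0$, so $g$ centralizes $h^k$ and therefore $g\gamma$ is bi-asymptotic to $\gamma$ after all. Only then does the uniform-bound argument you describe (a genuine Morse-type statement for lines at finite Hausdorff distance) apply. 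Once this is fixed, your ball-counting bound on $[E(h):\langle h\rangle]$ and your coset argument for producing arbitrarily many pairwise independent conjugates of $h$ are both correct.
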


Recall that the \emph{growth rate} of a group $G$ with a generating set $S$ is 
$$\lambda(G, S) := \lim_{n \to \infty} \frac{1}{n} \log \# 
\{ g \ : \ |g|_S \leq n \}$$
and that a group $G$ has \emph{exponential growth} if $\lambda(G, S) > 1$ for some (equivalently, all) finite generating sets $S$.

\begin{cor}
If $G$ is a non-virtually cyclic, finitely generated group with a contracting element, then $G$ has exponential growth.
\end{cor}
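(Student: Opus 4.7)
The plan is to construct, for each $k \geq 1$, an injection $\{1,2\}^k \hookrightarrow B(Ck)$ for a fixed constant $C>0$, using two independent contracting elements from Lemma~\ref{lem:indepContracting}. Since $\#B(Ck) \geq 2^k$ then forces $\lambda(G,S) \geq (\log 2)/C > 0$, exponential growth follows at once.

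By Lemma~\ref{lem:indepContracting}, pick independent contracting elements $h_1, h_2 \in G$ with $D$-contracting axes $\gamma_1, \gamma_2$. Choose $N$ large enough that the translation length of $h_i^N$ along $\gamma_i$ dominates all additive constants produced by Lemmas~\ref{lem:1segment} and~\ref{lem:concat}, and in particular exceeds the threshold $L(D)$ of Lemma~\ref{lem:concat}. For $\epsilon = (\epsilon_1, \ldots, \epsilon_k) \in \{1,2\}^k$, set
\[
g_j := h_{\epsilon_1}^N \cdots h_{\epsilon_j}^N, \qquad w(\epsilon) := g_k,
\]
and let $\kappa_j$ be the subsegment of the translated axis $g_{j-1}\gamma_{\epsilon_j}$ running from $g_{j-1}$ to $g_j$. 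When $\epsilon_j = \epsilon_{j+1}$, the segments $\kappa_j$ and $\kappa_{j+1}$ lie on a common axis, so alignment is automatic; when $\epsilon_j \neq \epsilon_{j+1}$, independence of $h_1, h_2$ gives a bounded projection between the two axes, and choosing $N$ large enough verifies the hypothesis of Lemma~\ref{lem:1segment} for consecutive $\kappa_j$. Iterating yields that the chain $(e, \kappa_1, \ldots, \kappa_k, w(\epsilon))$ is $E$-aligned for some $E = E(D)$.

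Lemma~\ref{lem:concat} then provides, inside any geodesic from $e$ to $w(\epsilon)$, consecutive subsegments $E'$-fellow-travelling each $\kappa_j$ in turn. Since $|w(\epsilon)| \leq k N \max(|h_1|, |h_2|) =: Ck$, it remains to verify injectivity of $\epsilon \mapsto w(\epsilon)$. For this, one reads off $\epsilon$ from any geodesic $[e, w(\epsilon)]$ by determining which axis translate each fellow-travelling subsegment runs along; independence of $\gamma_1, \gamma_2$ ensures that a sufficiently long subsegment cannot simultaneously fellow-travel translates of both axes, once $N$ is large. I expect this label-recovery step to be the main obstacle, as it requires using the quantitative $D$-independence of $h_1$ and $h_2$ to distinguish the geodesic traces of different $\epsilon$.
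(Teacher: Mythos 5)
Your proposal is correct in spirit but takes a genuinely different route from the paper, and it is worth noting the trade-off. The paper's proof is a one-liner: apply Lemma~\ref{lem:indepContracting} to obtain two independent contracting elements, then invoke the ping-pong lemma to produce a rank-two free subgroup, whence exponential growth. You instead build the exponential family $\{w(\epsilon)\}_{\epsilon\in\{1,2\}^k}\subset B(Ck)$ by hand and verify injectivity via the alignment machinery (Lemmas~\ref{lem:1segment} and~\ref{lem:concat}), which amounts to a direct, quantitative incarnation of the same ping-pong phenomenon. What your approach buys is self-containment within the lemmas already stated in the paper, and it only needs that $h_1^N$, $h_2^N$ generate a free monoid rather than a free group; what the paper's approach buys is brevity, by outsourcing the combinatorics to a classical citation.

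Two small cautions on your sketch, both in the injectivity step, which you rightly flag as the crux. First, "determining which axis translate each fellow-travelling subsegment runs along" is ambiguous when consecutive $\epsilon_j$ repeat, since the fellow-travelling blocks merge into a single longer block on $[e,w(\epsilon)]$; the cleaner route is to assume $w(\epsilon)=w(\epsilon')$, let $j$ be the first index of disagreement, and compare the $j$-th fellow-travelling subsegments delivered by Lemma~\ref{lem:concat}. Because the common prefix forces $g_{j-1}=g'_{j-1}$, both subsegments start within bounded distance of the same point and lie on the same geodesic $[e,w(\epsilon)]$, so for $N$ large they fellow-travel each other over a long stretch, and this places a long portion of $g_{j-1}\gamma_1$ within bounded Hausdorff distance of $g_{j-1}\gamma_2$, contradicting $D$-independence of $\gamma_1,\gamma_2$. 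Second, when you say "a sufficiently long subsegment cannot simultaneously fellow-travel translates of both axes," be careful: independence is only quantified for $\gamma_1,\gamma_2$ themselves (or equivalently conjugates by a single group element applied to both), not for arbitrary translates $u\gamma_1$, $v\gamma_2$; the argument just outlined works precisely because both segments are translated by the same $g_{j-1}$. With these fixes your proof closes correctly.
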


\begin{proof}
By the previous lemma and the ping pong lemma, $G$ contains a free group in two generators, hence it has exponential growth. 
\end{proof}

\begin{prop} \label{prop:not-hyp}
Let $G$ be a finitely generated group. Then either: 
\begin{enumerate}
\item $G$ is hyperbolic; then there exists $D > 0$ such that every geodesic is $D$-contracting;
\item $G$ does not contain any contracting element, or
\item $G$ is not hyperbolic and contains a contracting element. In this case, for any $D > 0$ there exists a geodesic segment $\gamma$ that is not $D$-contracting.
\end{enumerate}
\end{prop}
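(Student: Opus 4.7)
The plan is to handle (1) by a classical fact, treat (2) as vacuous, and argue (3) through its contrapositive: if some $D > 0$ makes every geodesic segment in $\Cay(G,S)$ uniformly $D$-contracting, then $G$ is hyperbolic. For (1), in a $\delta$-hyperbolic geodesic space any geodesic is $D(\delta)$-contracting because nearest-point projection onto a geodesic collapses disjoint balls to uniformly bounded diameter; this is standard. Alternative (2) requires no work, as it simply records the possibility that no contracting element exists.

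The content is in (3). Assuming uniform $D$-contraction of every geodesic segment, I plan to verify that every geodesic triangle is $M$-slim with $M = 10D$, which yields hyperbolicity. Fix a triangle with vertices $x, y, z$ and sides $\alpha = [x,y]$, $\beta = [x,z]$, $\gamma = [y,z]$, pick $m \in \pi_{\alpha}(z)$, and split $\alpha$ at $m$ into $[x,m]$ and $[m,y]$. Because $x, y$ are endpoints of $\alpha$, one has $\pi_{\alpha}(x) = \{x\}$ and $\pi_{\alpha}(y) = \{y\}$, so $d_{\alpha}(x, z) = d(x, m)$ and $d_{\alpha}(y, z) = d(m, y)$.

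To bound $[x, m]$: if $d(x, m) \ge D$, apply Lemma \ref{lem:contractingNbd} to the $D$-contracting geodesic $\alpha$ with the geodesic segment $\beta$ from $x$ to $z$; part (3) of the lemma, with $a = x$ and $b = m$, provides a subsegment of $\beta$ within Hausdorff distance $10D$ of $[x, m] \subseteq \alpha$. If instead $d(x, m) < D$, every point of $[x, m]$ is $D$-close to $x \in \beta$. The symmetric argument with $\gamma$ in place of $\beta$ handles $[m, y]$. Thus $\alpha$ lies in the $10D$-neighbourhood of $\beta \cup \gamma$; permuting the roles of the three vertices shows the triangle is $10D$-slim, hence $G$ is hyperbolic, contradicting the hypothesis of case (3).

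The main step is the slim-triangle reduction above; everything else is structural bookkeeping. The one point needing care is that Lemma \ref{lem:contractingNbd}'s hypothesis $d_{\alpha}(x, z) \ge D$ would be delicate if $\pi_{\alpha}(z)$ were badly spread out, but here it collapses to the single inequality $d(x, m) \ge D$ precisely because $\pi_{\alpha}(x)$ is a singleton at the endpoint. This endpoint observation is the mechanism that lets one projection of the opposite vertex split $\alpha$ into two pieces each controlled by a single invocation of the preliminaries, making the whole argument essentially immediate.
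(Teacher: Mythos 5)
Your argument is correct but routes through a different hyperbolicity criterion than the paper's. The paper verifies that all geodesic \emph{bigons} are uniformly $10D$-thin, then invokes Papasoglu's theorem (\cite[Theorem 1.4]{papasoglu1995strongly}) that thin bigons in a graph imply hyperbolicity; the computation is particularly clean because for a bigon $\gamma, \kappa$ from $x$ to $y$, both $\pi_\gamma(x)$ and $\pi_\gamma(y)$ degenerate to endpoints, so a single application of Lemma \ref{lem:contractingNbd}(3) shows $\gamma \subseteq \mathcal{N}_{10D}(\kappa)$, and symmetry finishes. You instead prove the Rips slim-triangle condition directly by splitting $\alpha$ at $m \in \pi_\alpha(z)$ and applying the lemma once per half; this avoids the dependence on Papasoglu's non-trivial bigon criterion (which is special to graphs, not general geodesic spaces) at the cost of handling the splitting point, but the endpoint observation you highlight makes both halves controllable. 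One small inaccuracy: $d_\alpha(x,z) = \diam\bigl(\{x\} \cup \pi_\alpha(z)\bigr)$ need not equal $d(x,m)$ when $\pi_\alpha(z)$ has more than one point; however only the inequality $d_\alpha(x,z) \geq d(x,m)$ is used to trigger the lemma's hypothesis, and that holds, so the argument is unaffected. Both routes are legitimate, with yours being slightly more self-contained and the paper's slightly shorter.
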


\begin{proof}
Suppose there exists some $D>0$ such that every geodesic in some Cayley graph of $G$ is $D$-contracting. To show the claim it suffices to argue that $G$ must be hyperbolic, which we do by arguing that all geodesic bigons are uniformly thin \cite[Theorem 1.4]{papasoglu1995strongly}.

Let $\gamma$ and $\kappa$ be two geodesics connecting a point $x \in G$ to another point $y \in G$. If $d(x, y) < D$, then $\diam(\gamma \cup \kappa) < 2D$. If $d(x, y) \ge D$, then Lemma \ref{lem:contractingNbd} asserts that $[\pi_{\gamma}(x), \pi_{\gamma}(y)]=\gamma$ is contained in the $10D$-neighborhood of $\kappa$. Similarly, $\kappa$ is contained in the $10D$-neighborhood and the bigon is $10D$-thin. 
\end{proof}

From now on, we assume that $G$ has independent $D$-contracting elements $h_{1}, h_{2}, h_{3}$. By increasing $D$ if necessary, we have that: \begin{itemize}
    \item $|h_{1}|, |h_{2}|, |h_{3}| \le D$;
    \item $[e, a^{n}]$ is $D$-contracting for each $a \in \{h_{1}, h_{2}, h_{3}\}$ and $n \in \Z$;
    \item $|\pi_{[e, a^{n}]}([e, b^{m}])| \le D$ for each $a, b \in \{h_{1}, h_{2}, h_{3}\}$ such that $a\neq b$ and $n, m \in \Z$.
\end{itemize} We now determine some constants: \begin{itemize}
\item $D_{1} = E(45D)$ be as in Lemma \ref{lem:1segment};
\item $D_{2} = E(D_{1})$, $L_{1} = L(D_{2})$ be as in Lemma \ref{lem:concat};
\item $D_{3} = E(D_{2})$ be as in Lemma \ref{lem:concatConverse};
\item $D_{4} = E(12D_{3})$ be as in Corollary \ref{cor:contractingHereditary};
\item $D_{5} = E(3D_{4})$, $L_{2} = L(D_{4})$ be as in Lemma \ref{lem:concat}.
\end{itemize}
Note that $D <D_{1} < D_{2} < D_{3} < D_{4} < D_{5}$.

Let $L = L_{1} + L_{2}$. By taking powers of $h_{i}$'s if necessary, we may assume that  $L \le |h_{i}| \le L+D$ for $i=1, 2, 3$.

\begin{lem}\label{lem:uniquecontract}
For each $x, y \in G$, one has the bounds $|\pi_{[e, h]}(x)| < 45D$
and $|\pi_{[e, h^{-1}]}(y)| <45D$ for at least one element $h \in \{h_{1}, h_{2}, h_{3}\}$.
\end{lem}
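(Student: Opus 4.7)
The plan is to run a pigeonhole argument over the three elements $h_1, h_2, h_3$. Call an index $i \in \{1,2,3\}$ \emph{bad for $x$} if $|\pi_{[e, h_i]}(x)| \ge 45D$, and \emph{bad for $y$} if $|\pi_{[e, h_i^{-1}]}(y)| \ge 45D$. The core claim is that at most one $i$ is bad for $x$ and, symmetrically, at most one is bad for $y$; then some $i \in \{1,2,3\}$ is bad for neither $x$ nor $y$, and setting $h := h_i$ proves the lemma.

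For the core claim, assume $i = 1$ and $i = 2$ are both bad for $x$, witnessed by projection points $a_i \in \pi_{[e, h_i]}(x)$ with $|a_i| \ge 45D$. The independence assumption gives $\pi_{[e, h_j]}([e, h_k]) \subseteq B(e, D)$ for $j \ne k$, and combining with $|a_i| \ge 45D$ yields
\[
d(a_1, [e, h_2]) \ge |a_1| - D \ge 44D
\]
and symmetrically $d(a_2, [e, h_1]) \ge 44D$.

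Next, I apply Lemma \ref{lem:contractingNbd} to the geodesic $[x, e]$ with contracting segment $[e, h_i]$ for $i = 1, 2$: since $d_{[e, h_i]}(x, e) = |a_i| \ge 45D > D$, we obtain a subsegment $\sigma_i = [x_i', y_i']$ of $[x, e]$ with $d(x_i', a_i) < 2D$, $d(y_i', e) < 2D$, and $\sigma_i$ within Hausdorff distance $10D$ of $[e, a_i] \subseteq [e, h_i]$. After relabeling, assume $d(x_1', e) \le d(x_2', e)$, so that along $[x, e]$ parameterized by arc length from $x$, the point $x_2'$ precedes $x_1'$; meanwhile $y_2'$ lies within $2D$ of $e$ while $d(x_1', e) \ge |a_1| - 2D \ge 43D$, so $y_2'$ is far past $x_1'$. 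Hence $x_1' \in \sigma_2 = [x_2', y_2']$, and Hausdorff closeness gives $d(x_1', [e, a_2]) \le 10D$, which yields $d(a_1, [e, a_2]) \le 12D$. But $[e, a_2] \subseteq [e, h_2]$ forces $d(a_1, [e, a_2]) \ge 44D$, a contradiction.

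I expect the main fiddliness to lie in justifying the ordering of the four points $x_1', y_1', x_2', y_2'$ along $[x, e]$ so that the containment $x_1' \in \sigma_2$ is rigorous; everything else is a direct application of Lemma \ref{lem:contractingNbd} together with the independence of the three axes. The argument for $y$ is verbatim the same, using $[e, h_i^{-1}]$ in place of $[e, h_i]$ and the geodesic $[y, e]$.
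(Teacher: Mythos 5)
Your proof is correct and follows essentially the same approach as the paper: both apply Lemma~\ref{lem:contractingNbd} to a geodesic between $e$ and $x$ to force a point on $[e,h_1]$ at distance $\ge 45D$ from $e$ to be close to $[e,h_2]$, contradicting the independence bound $|\pi_{[e,h_2]}([e,h_1])| \le D$. The paper sidesteps your ordering-of-subsegments step by instead fixing $y_i \in [e,h_i]$ at word norm exactly $45D$ and comparing their $10D$-near companions on the geodesic directly via the distance from $e$, but the core mechanism is identical.
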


\begin{proof}
Suppose that $|\pi_{[e, h]}(x)| \ge 45D$ for two elements $h\in \{h_1, h_2, h_3\}$, say for $h=h_{1}$ and $h_{2}$. Let $\gamma$ be a geodesic connecting $e$ to $x$. Let $y_{1} \in [e, \pi_{[e, h_{1}]}(x)]$ such that $|y_{1}| = 45D$. Lemma \ref{lem:contractingNbd} tells us that $[e, \pi_{[e, h]}(x)]$ is contained in the $10D$-neighborhood of $\gamma$ as $d_{[e, h_{1}]}(e, x) \ge 45D > D$. Hence, there exists $x_{1} \in \gamma$ such that $d(x_{1}, y_{1}) \le 10D$. Similarly, we can find $y_{2} \in [e, \pi_{[e, h]_{2}}(x)]$ with $|y_{2}| = 45D$ and $x_{2} \in \gamma$ such that $d(x_{2}, y_{2}) < 10D$.

Note that $x_{1}, x_{2}$ are points on the same geodesic $\gamma$ and $|d(e, x_{1}) - d(e, x_{2})| \le 20D$. This implies that $d(x_{1}, x_{2}) < 20D$, and $d(y_{1}, y_{2}) < 40D$. We then have \[\begin{aligned}
d(y_{1}, \pi_{[e, h_{2}]}(y_{1})) &= d(y_{1}, [e, h_{2}]) \le d(y_{1}, y_{2}) < 40D,\\
|\pi_{[e, h_{2}]}([e, h_{1}])| &\ge |\pi_{[e, h_{2}]}(y_{1})|  \ge 45D - d(y_{1}, \pi_{[e, h_{2}]}(y_{1}))\\
&> 45D - 40D > D,
\end{aligned}
\] a contradiction. We conclude that $|\pi_{[e, h]}(x)| < 45D$ for at least two elements $h \in \{h_{1}, h_{2}, h_{3}\}$.

Similarly, $|\pi_{[e,h^{-1}]}(y)| > 45D$ for at least two elements $h\in \{h_1, h_2, h_3\}$. Hence, at least one $h \in \{h_{1}, h_{2}, h_{3}\}$ satisfies both properties.
\end{proof}

We now fix an arbitrary element $f \in G$ for later use. Let $\{ I_n \}_{n \in \mathbb{N}}$
be a sequence of intervals such that 
$I_n \subset \{0,...,n\}$ for each $n$. In the proof of Theorem \ref{thm:counting}, we will pick $I_n = [\eta, \eta + \epsilon] \cdot n \cap \mathbb{N}$.

We now define: \[\begin{aligned}
\mathcal{A} &:= \left\{ g = u_{1} v_{1}fv_{2} u_{2} : \begin{array}{c} (e, u_1 [e, v_{1}], u_{1}v_{1}f[e, v_{2}], g)\,\,\textrm{is $D_{4}$-aligned}, \\v_{1}, v_{2} \in \{h_{1}, h_{2}, h_{3}\}, \,\, |u_{1}| \in I_{|g|} \end{array} \right\}, \\
\mathcal{B} &:= G \setminus \mathcal{A}.
\end{aligned}
\]
Let $\mathcal{B}_{n} := \{ g \in \mathcal{B} : |g|= n\}$. We want to prove:

\begin{prop} \label{prop:growthGap}
Let $G$ be a non-virtually cyclic, finitely generated group with a contracting element. Then there exists $C> 0$ such that for any choice of $f, \{I_n\}_{n\in\mathbb{N}}$ and $M > 10(L+D_{2})+|f|$, we have
$$\# B(e,n) \geq  \#\mathcal{B}_n\cdot \frac{1}{C\#B(e, M)} \left(1+\frac{1}{C  \# B(e,M)}\right)^{\lfloor|I_n|/M\rfloor}$$ 
for any $n \geq 1$. 
\end{prop}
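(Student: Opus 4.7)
The plan is to inject many \emph{surgery variants} of elements of $\mathcal{B}_n$ into $B(e,n)$. For each $g \in \mathcal{B}_n$ and each subset $T \subset \{1, \ldots, k\}$ with $k := \lfloor|I_n|/M\rfloor$, I construct a modified element $\Phi(g, T) \in \mathcal{A} \cap B(e, n)$ by splicing a short contracting word $v_1 f v_2$ into a geodesic $[e, g]$ at $|T|$ well-separated positions inside $I_n$. The hypothesis $M > 10(L+D_2) + |f|$ ensures that the insertions do not interfere, and the alignment lemmas guarantee that the surgery is almost reversible; summing over all $T$ produces the binomial expansion of $(1+x)^k$.

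Concretely, I would fix a geodesic $\gamma = [e, g]$ and points $p_1, \ldots, p_k \in \gamma$ with $|p_i| = \min I_n + iM$, so that each $p_i$ lies in $I_n$ and consecutive $p_i$'s are $M$ apart. At each $p_i$, apply Lemma \ref{lem:uniquecontract} to the pair $(p_i^{-1}, p_i^{-1} g)$ to select $v_1^{(i)}, v_2^{(i)} \in \{h_1, h_2, h_3\}$ whose axes have bounded projection from the two halves of $\gamma$ meeting at $p_i$. For each subset $T \subset \{1, \ldots, k\}$, define $\Phi(g, T)$ by excising, at each $p_i$ with $i \in T$, a subsegment of $\gamma$ of length $|v_1^{(i)} f v_2^{(i)}|$ symmetrically about $p_i$ and replacing it by the word $v_1^{(i)} f v_2^{(i)}$. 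Since the excised and inserted lengths agree, $|\Phi(g, T)| \leq n$. Lemma \ref{lem:1segment} together with Corollary \ref{cor:contractingHereditary} then guarantees that the inserted subwords create precisely the alignment required for membership in $\mathcal{A}$, so $\Phi(g, T) \in \mathcal{A} \cap B(e, n)$ whenever $T \neq \emptyset$.

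The core step is the injectivity analysis. Given $h = \Phi(g, T)$, any geodesic $[e, h]$ fellow-travels, at each $p_i$ with $i \in T$, a contracting segment corresponding to $v_1^{(i)} f v_2^{(i)}$ by Lemma \ref{lem:concat}; hence $T$ and the choices $v_j^{(i)}$ are essentially recoverable from $h$ alone. The only residual ambiguity lies in the short excised subsegments of $\gamma$ around each $p_i$: each is a group element of length $\leq M$, contributing at most $\#B(e, M)$ preimage choices per insertion, for a total of at most $(C\#B(e, M))^{|T|}$ choices (absorbing the $9^{|T|}$ choices of $v_j^{(i)}$ and other bounded factors into $C$). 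An overall $C\#B(e, M)$ factor handles the remaining slack at the endpoints. Crucially, the hypothesis $g \in \mathcal{B}_n$ rules out accidental $\mathcal{A}$-type structure already present in $g$, so distinct $g \in \mathcal{B}_n$ cannot collide under surgery. Summing over all $T$ and using $\sum_{\tau=0}^{k} \binom{k}{\tau} x^\tau = (1+x)^k$ yields
\[
\#B(e, n) \;\geq\; \sum_{\tau = 0}^{k} \binom{k}{\tau} \cdot \frac{\#\mathcal{B}_n}{C \#B(e, M) \, (C\#B(e, M))^{\tau}} \;=\; \frac{\#\mathcal{B}_n}{C \#B(e, M)} \Bigl(1 + \tfrac{1}{C \#B(e, M)}\Bigr)^{k},
\]
the claimed bound.

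The hardest step will be the injectivity analysis: one must verify that the contracting pieces inserted at different positions do not interact or cancel (controlled by the spacing $M > 10(L + D_2) + |f|$ via iterated applications of Lemmas \ref{lem:1segment} and \ref{lem:concat}), that the inserted pieces are canonically detectable from $\Phi(g, T)$ alone so that $T$ and $\vec{v}$ are recoverable, and that the definition of $\mathcal{B}_n$ is strong enough to prevent collision: any $\mathcal{A}$-type decomposition of $\Phi(g, T)$ must come from the inserted $v_1^{(i)} f v_2^{(i)}$ and not from latent structure in $g$. Tracking the alignment constants through the chain of lemmas, and aligning them with the cascade $D \leq D_1 \leq D_2 \leq \ldots \leq D_5$ defined earlier, requires careful bookkeeping.
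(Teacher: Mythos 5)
Your high-level idea is aligned with the paper's: build many elements of $B(e,n)$ by taking $g$ from the ``flexible'' set $\mathcal{B}_n$, selecting positions in $I_n$ spaced $M$ apart, and splicing in short contracting words $v_1 f v_2$ at the selected positions, with the binomial theorem producing the claimed lower bound. However, there is a genuine gap in the injectivity analysis, and your construction is also structurally different from the paper's in a way that matters.

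The paper does not perform surgery on a fixed geodesic $[e,g]$. Instead it cuts each $g \in \mathcal{B}_n$ into arcs $\mathcal{B}_{n,[k,k']}$, passes to maximal $(H, N_{sep})$-\emph{separated} subsets $\mathcal{B}^\circ_{n,[k,k']}$ of these arc-sets, and then glues freely-chosen separated arcs together with inserted words $a_i f b_i$. The separation is the key device that makes the gluing map honestly injective. Your argument omits it, and the sentence ``the hypothesis $g \in \mathcal{B}_n$ rules out accidental $\mathcal{A}$-type structure \ldots so distinct $g \in \mathcal{B}_n$ cannot collide under surgery'' is conflating two different things. In the paper the injectivity lemma splits into two cases: (a) same insertion positions $(k_i)$ but different arcs $(g_i)$, ruled out precisely by the $N_{sep}$-separation of $\mathcal{B}^\circ$ (alignment pins down the endpoints only up to $\approx 11 D_2$, so without separation two nearby but distinct arcs could glue to the same product); and (b) different positions $(k_i) \neq (k'_i)$, ruled out by deriving that some $g$ would then belong to $\mathcal{A}$, contradicting $g \in \mathcal{B}_n$. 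The $\mathcal{B}_n$ membership is doing work only in case (b). Your claim that the bounded-fiber count $(C\#B(e,M))^{|T|}$ handles case (a) is plausible in spirit, but as written you have no mechanism preventing an unbounded family of $g \in \mathcal{B}_n$ from producing the same $\Phi(g,T)$: alignment only locates each interstitial piece of $[e,g]$ up to a bounded perturbation at each junction, and without a separation hypothesis these perturbations are not accounted for anywhere in your count. You would need to either add the separated-subset step or give a direct argument that distinct $g$'s with the same $T$ yield distinct outputs; neither is present.

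A smaller issue: Lemma \ref{lem:uniquecontract} produces a \emph{single} $h \in \{h_1,h_2,h_3\}$ whose axis has small projection from \emph{both} of a given pair of points, not two. The paper accordingly uses it twice at each junction (once for $a_i$, once for $b_i$, with $f$ itself as one of the two reference points). Your description ``select $v_1^{(i)}, v_2^{(i)}$ whose axes have bounded projection from the two halves of $\gamma$'' doesn't match the lemma's statement and, more importantly, omits the role of $f$ in determining $a_i$ and $b_i$, which is exactly what makes the four alignment conditions (1)--(4) before Lemma \ref{lem:target} hold. Also note that the symmetric-excision-of-length-$|v_1 f v_2|$ step doesn't by itself ensure $|u_1| \in I_{|\Phi(g,T)|}$: the output element generally has length strictly less than $n$, so $I_{|\Phi(g,T)|}$ is not $I_n$; the paper sidesteps this because it never needs $F(\cdot) \in \mathcal{A}$, only $F(\cdot) \in B(e,n)$ plus injectivity.
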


To show this, let us first fix a geodesic segment $\gamma(g) = [e, g]$ for each $g \in \mathcal{B}_{n}$. Then $\gamma(g) = (\gamma(g)_{k})_{k=0}^{n}$ is a sequence in $G$ satisfying \[
d\big(\gamma(g)_{k}, \gamma(g)_{k'}\big) = \big|\gamma(g)_{k}^{-1} \gamma(g)_{k'}\big| = k' - k \quad (0 \le k \le k' \le n).
\] For each $0 \le k \le k' \le n$, let us define: \[
\mathcal{B}_{n, [k, k']} := \big\{ \gamma(g)_{k}^{-1} \gamma(g)_{k'} : g \in \mathcal{B}_{n}\big\}.
\]

\begin{lem} 
For $0=k_{1} \le k_{2} \le \ldots \le k_{m} = n$, we have \[
\prod_{i=1}^{m-1} \big(\# \mathcal{B}_{n,[k_{i}, k_{i+1}]} \big)\ge \#\mathcal{B}_{n}.
\]
\end{lem}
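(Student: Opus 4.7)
The plan is to exhibit an injective ``subdivision map''
\[
\Phi : \mathcal{B}_n \hookrightarrow \prod_{i=1}^{m-1} \mathcal{B}_{n,[k_i,k_{i+1}]}
\]
and then simply compare cardinalities. Recall that for each $g \in \mathcal{B}_n$ we have fixed once and for all a geodesic $\gamma(g) = (\gamma(g)_0,\dots,\gamma(g)_n)$ from $e$ to $g$. Using these choices, define
\[
\Phi(g) := \bigl(\gamma(g)_{k_1}^{-1}\gamma(g)_{k_2},\ \gamma(g)_{k_2}^{-1}\gamma(g)_{k_3},\ \dots,\ \gamma(g)_{k_{m-1}}^{-1}\gamma(g)_{k_m}\bigr).
\]
By definition of $\mathcal{B}_{n,[k,k']}$, the $i$-th coordinate of $\Phi(g)$ lies in $\mathcal{B}_{n,[k_i,k_{i+1}]}$, so $\Phi$ indeed takes values in the claimed product.

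For injectivity, I would use a telescoping argument. Suppose $\Phi(g) = \Phi(g')$. Since $k_1 = 0$ and $k_m = n$, and $\gamma(g)_0 = e$, $\gamma(g)_n = g$, multiplying the coordinates of $\Phi(g)$ in order yields
\[
\prod_{i=1}^{m-1} \gamma(g)_{k_i}^{-1}\gamma(g)_{k_{i+1}} \;=\; \gamma(g)_{k_1}^{-1}\gamma(g)_{k_m} \;=\; g,
\]
and the same computation applied to $\Phi(g')$ gives $g'$. Since the two products agree coordinate by coordinate, they agree as group elements, and hence $g = g'$. Therefore $\Phi$ is injective, and the stated inequality follows immediately by taking cardinalities.

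There is essentially no genuine obstacle here: the only point requiring a moment of care is that the geodesic $\gamma(g)$ must be fixed before defining the sets $\mathcal{B}_{n,[k,k']}$, so that the map $\Phi$ is well-defined. The telescoping then uses nothing beyond $k_1 = 0$ and $k_m = n$ and the fact that consecutive subsegments of a geodesic multiply to the total. Note that no property of $\mathcal{B}$ is used beyond the existence of a chosen geodesic representative for each of its elements; in particular, the subsegments $\gamma(g)_{k_i}^{-1}\gamma(g)_{k_{i+1}}$ need not themselves lie in $\mathcal{B}$, which is why the argument must proceed via the auxiliary sets $\mathcal{B}_{n,[k_i,k_{i+1}]}$ rather than via $\mathcal{B}$ itself.
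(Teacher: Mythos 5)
Your proof is correct and matches the paper's argument essentially verbatim: both define the same ``subdivision'' map into the product and observe that the coordinates telescope to recover $g$, giving injectivity.
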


\begin{proof}
Let $f : \mathcal{B}_{n} \rightarrow \prod_{i=1}^{m-1} \mathcal{B}_{n, [k_{i}, k_{i+1}]}$ be the natural map \[
f(g) := ( \gamma(g)_{k_{1}}^{-1} \gamma(g)_{k_{2}}, \ldots, \gamma(g)_{k_{m-1}}^{-1} \gamma(g)_{k_{m}}).
\]
This is an injection. 
Indeed, if $f(g) = f(g') = (w_1, \dots, w_m)$, then by construction 
$$\gamma(g)_{n} = w_1 w_2 \dots w_m = \gamma(g')_{n}$$
which implies $g = g'$.
\end{proof}

Let us set $N_{sep}= 20D_{2}$. Now let $H = \{h_{i}^{-1} h_{j} : i, j \in \{1, 2, 3\}\}$ and pick a maximal subset $\mathcal{B}_{n}^{\circ}$ of $\mathcal{B}_{n}$ whose elements are mutually $(H$, $N_{sep})$-separated.
In other words, we require that $d(x, hy) \ge N_{sep}$ for all distinct $x, y \in \mathcal{B}_{n}^{\circ}$ and $h \in H$. 

\begin{lem}
For any $n \in \mathbb{N}$, one has the inequality
    $$(\# \mathcal{B}_{n}^{\circ}) (7\#B(e, N_{sep})) \ge \#\mathcal{B}_{n}.$$
\end{lem}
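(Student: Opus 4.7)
The plan is to exploit the maximality of $\mathcal{B}_n^\circ$ as a separated set: every $z \in \mathcal{B}_n$ must lie in some translate of $B(e, N_{sep})$ centered near an element of $\mathcal{B}_n^\circ$, and there are only finitely many possible translates, one for each element of $H$. Once this is established, the bound follows by counting elements in these balls.

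First I would observe that $|H| \leq 7$. Indeed, $H = \{h_i^{-1} h_j : i, j \in \{1,2,3\}\}$, and the three diagonal pairs $(1,1), (2,2), (3,3)$ all contribute the identity $e$, while the six off-diagonal pairs contribute at most six additional elements. Thus $H$ contains $e$ together with at most $6$ non-identity elements, giving $|H| \leq 7$. Next, by the maximality of $\mathcal{B}_n^\circ$, for every $z \in \mathcal{B}_n$ the set $\mathcal{B}_n^\circ \cup \{z\}$ fails the $(H, N_{sep})$-separation condition: either $z \in \mathcal{B}_n^\circ$ already, or there exists some $x \in \mathcal{B}_n^\circ$ with $x \neq z$ and some $h \in H$ such that $d(x, hz) < N_{sep}$. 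Since $e \in H$, the case $z \in \mathcal{B}_n^\circ$ is absorbed into the same statement (taking $x = z$ and $h = e$). Thus
\[
\mathcal{B}_n \;\subseteq\; \bigcup_{x \in \mathcal{B}_n^\circ} \bigcup_{h \in H} h^{-1} B(x, N_{sep}).
\]

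Finally, I would count: for each $x \in G$ and each $h \in H$, left multiplication is an isometry of $\Cay(G,S)$, so $h^{-1} B(x, N_{sep})$ has the same cardinality as $B(e, N_{sep})$. Therefore the inner union over $h \in H$ contains at most $|H| \cdot \# B(e, N_{sep}) \leq 7 \# B(e, N_{sep})$ elements, and summing over $x \in \mathcal{B}_n^\circ$ yields $\# \mathcal{B}_n \leq (\# \mathcal{B}_n^\circ)\cdot 7 \# B(e, N_{sep})$, which is exactly the desired inequality. There is no real obstacle here; the only point that requires care is the correct accounting of $|H|$ (ensuring the factor $7$ rather than $9$) and the symmetry observation that the condition $d(x, hz) < N_{sep}$ for $h \in H$ is equivalent to $z \in h^{-1} B(x, N_{sep})$, which is immediate.
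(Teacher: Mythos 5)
Your proof is correct and follows essentially the same approach as the paper's: use maximality of $\mathcal{B}_n^\circ$ to cover $\mathcal{B}_n$ by at most $7 \cdot \#\mathcal{B}_n^\circ$ isometric copies of $B(e, N_{sep})$, then count. The only cosmetic difference is that you cover by the sets $h^{-1}B(x, N_{sep})$ while the paper covers by $B(hx, N_{sep})$; these coincide as families since $H = H^{-1}$ and the word metric is left-invariant.
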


\begin{proof}
For any $z \in \mathcal{B}_n$, by maximality there exists $h \in H, x \in \mathcal{B}_{n}^{\circ}$ such that $d(z, h x) \leq N_{sep}$. Otherwise the set $\mathcal{B}_n^{\circ}\cup \{z\}$ would still be $(H, N_{sep})$-separated (which follows from $H = H^{-1}$). Hence 
$$\mathcal{B}_n \subseteq \bigcup_{
\stackrel{h \in H}{ x \in \mathcal{B}_n^{\circ}}} B(h x, N_{sep})$$ 
from which the claim follows since $\# H  =7$. 
\end{proof}

We similarly define the separated version $\mathcal{B}_{n, [k, k']}^{\circ}$ of $\mathcal{B}_{n, [k, k']}$.
By the same reasoning, we have \[(\# \mathcal{B}_{n, [k,k']}^{\circ}) (7\#B(e, N_{sep})) \ge \#\mathcal{B}_{n, [k, k']}.\]

Now given integers $ k_{1} < k_{2} < \ldots < k_{m-1} $ in the interval $I_n/M$, we draw a choice  \[
(g_{1}, \ldots, g_{m}) \in \mathcal{B}^{\circ}_{n, [0, Mk_{1}]} \times \mathcal{B}^{\circ}_{n, [M(k_{1}+1), Mk_{2}]} \times \cdots \times \mathcal{B}^{\circ}_{n, [M(k_{m-1} + 1), n]}.
\]

Let $b_0 := e$. For each $i=1, \ldots, m-1$, there exists a choice $a_{i}, b_{i} \in \{h_{1}, h_{2}, h_{3}\}$ 
such that all of the following hold: \begin{enumerate}
\item  $(b_{i-1}^{-1}, g_{i} [e, a_{i}])$ is $45D$-aligned,
\item $([e, a_{i}], a_{i}f)$ is $45D$-aligned,
\item $(e, a_{i} f [e, b_{i}])$ is $45D$-aligned,
\item $([e, b_{i}], b_{i} g_{i+1})$ is $45D$-aligned.

\end{enumerate}

Indeed, by Lemma \ref{lem:uniquecontract} there exists a choice of $a_i \in \{h_{1}, h_{2}, h_{3}\}$ such that $|\pi_{[e, a_{i}]}(g_i^{-1} b_{i-1}^{-1})|<45D$ and $|\pi_{[e, a_{i}^{-1}]}(f)|<45D$. Similarly, there exists $b_{i}$ that works. Note that our choice of $a_{i}$ depends on $(g_{1}, \ldots, g_{i})$ and $(a_{j}, b_{j})_{j=1}^{i-1}$, while $b_{i}$ depends on $(g_{1}, \ldots, g_{i+1})$, $(a_{j}, b_{j})_{j=1}^{i-1}$ and $a_{i}$. If multiple choices work, we pick the one corresponding to $h_i$ for minimal $i \in \{1,2,3\}$.

After choosing $a_{i}$'s and $b_{i}$'s, we let \[
w_{i} = \left(\prod_{j=1}^{i-1} g_{j} a_{j} f b_{j} \right)\cdot g_{i}, \quad v_{i} = w_{i} a_{i} f
\]
for $i=1, \ldots, m$. We claim that if the above 4 conditions hold for each $i$, then 
\[
\big(e, \,w_{1} [e, a_{1}], \, v_{1}[e, b_{1}], \,\ldots, w_{i}  [e, a_{i}], \, v_{i} [e, b_{i}], \, \ldots, \,v_{m-1}[e, b_{m-1}] , \,w_{m}\big)
\]
is $D_{1}$-aligned. 

By Lemma \ref{lem:1segment}, Condition (2) and (3) for $i$ tell us that $([e, a_{i}], a_{i} f [e, b_{i}])$ is $D_{1}$-aligned; so is $(w_{i}[e, a_{i}], v_{i}[e, b_{i}])$. Also, Condition (4) for $i-1$ and Condition (1) for $i$ guarantee that $([e, b_{i-1}], b_{i-1} g_{i} [e, a_{i}])$, and hence $(v_{i-1} [e, b_{i-1}], w_{i} [e, a_{i}])$, is $D_{1}$-aligned. Let us now check the endpoint cases.
Since $b_0 = e$, Condition (1) for $i = 1$ yields $\pi_{[e, a_1]}(g_1^{-1}) < 45D$ so $(e, g_{1}[e, a_{1}])$ is $45D$-aligned, hence $D_{1}$-aligned. Moreover, Condition (4) with $i = m-1$ yields
$$\diam (\pi_{[v_{m-1}, v_{m-1} b_{m-1}]}(w_{m}) \cup \{v_{m-1} b_{m-1} \}) < 45D,$$ 
which implies that $(v_{m-1}[e, b_{m-1}], w_{m})$ is $D_{1}$-aligned as wanted.

\medskip
Given such a choice of $k_i$'s and $g_i$'s, we define \[
F\left((k_{i})_{i=1}^{m-1}, (g_{i})_{i=1}^{m}\right) := w_{m}= g_{1} a_{1} f b_{1} g_{2} a_{2} f b_{2} \cdots g_{m}.
\]
We also fix indices $k_{0} = -1$ and $k_{m} = n/M$ (possibly a non-integer).

\begin{lem}\label{lem:target}
For any choice $(k_{i}), (g_{i})$, 
the element $F((k_{i})_{i}, (g_{i})_{i})$ lies in $B(e, n)$.
\end{lem}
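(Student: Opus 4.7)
The plan is to prove this by a direct triangle inequality computation; no contracting-geometry input is needed here. Unwinding the definition,
\[
F\bigl((k_i)_i,(g_i)_i\bigr) \;=\; g_1 a_1 f b_1\, g_2 a_2 f b_2 \cdots g_{m-1} a_{m-1} f b_{m-1}\, g_m,
\]
so by the triangle inequality
\[
|F| \;\le\; \sum_{i=1}^{m} |g_i| \;+\; \sum_{i=1}^{m-1}\bigl(|a_i| + |f| + |b_i|\bigr).
\]

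For the first sum I would use the convention $k_0 = -1$, $k_m = n/M$ and the fact that $g_i \in \mathcal{B}^{\circ}_{n,[M(k_{i-1}+1),\,Mk_i]}$, which forces $|g_i| = M(k_i - k_{i-1}) - M$ for every $i$ (including the endpoints: for $i=1$, $M(k_1 - (-1)) - M = Mk_1$, and for $i=m$, $M(n/M - k_{m-1}) - M = n - M(k_{m-1}+1)$). The sum then telescopes:
\[
\sum_{i=1}^{m} |g_i| \;=\; M(k_m - k_0) - mM \;=\; M\!\left(\tfrac{n}{M} + 1\right) - mM \;=\; n - (m-1)M.
\]

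For the second sum, since each $a_i, b_i \in \{h_1, h_2, h_3\}$ and these generators were normalized to satisfy $L \le |h_i| \le L+D$, I bound each bracket by $2(L+D) + |f|$, giving a total contribution of $(m-1)(2(L+D) + |f|)$. Combining,
\[
|F| \;\le\; n + (m-1)\bigl[\, 2(L+D) + |f| - M \,\bigr].
\]
The hypothesis $M > 10(L+D_2) + |f|$, together with $D \le D_2$, gives $M > 2(L+D) + |f|$, so the bracket is strictly negative and hence $|F| \le n$, i.e.\ $F \in B(e,n)$.

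There is no real obstacle; the only thing to be careful about is the bookkeeping in the telescoping sum, in particular that the endpoint conventions $k_0=-1$ and $k_m = n/M$ are compatible with the interval lengths prescribed in the definition of the product $\mathcal{B}^{\circ}_{n,[0,Mk_1]} \times \cdots \times \mathcal{B}^{\circ}_{n,[M(k_{m-1}+1),\,n]}$. None of the alignment machinery built up earlier (Lemmas \ref{lem:1segment}--\ref{lem:concatConverse} or the choice of $a_i, b_i$) is needed for this lemma — that input will be used later to show that the map $F$ is injective, not to bound the norm of its image.
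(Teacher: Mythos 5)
Your proof is correct and follows essentially the same route as the paper's: a triangle-inequality bound $|F| \le \sum_i |g_i| + \sum_i(|a_i|+|f|+|b_i|)$, the observation that $|g_i| = M(k_i - k_{i-1}-1)$ (the paper states this as a $\le$, but as you note it is an equality since elements of $\mathcal{B}_{n,[k,k']}$ have norm exactly $k'-k$), a telescoping using the conventions $k_0 = -1$, $k_m = n/M$, and the bound $|a_i|+|f|+|b_i| \le 2(L+D)+|f| < M$. Your write-up merely makes explicit the step the paper compresses into the phrase ``by construction we have $|a_i|+|b_i|+|f| \le M$.''
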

\begin{proof}
By construction we have $|a_{i}| + |b_{i}| + |f| \leq M$ for $i=1, \ldots, m-1$ and $|g_i| \leq M(k_{i} - k_{i-1}-1)$. Hence 
$$|F((k_{i})_{i=1}^{m-1}, (g_{i})_{i=1}^{m})| \leq \sum_{i = 1}^m M(k_{i} - k_{i-1}-1) + (m-1)M \leq n. \qedhere$$
\end{proof}

\medskip
Given $m$, there are $\binom{|I_n|/M}{m-1}$ ways to pick the sequence $(k_{i})_{i=1}^{m-1}$. Having chosen $(k_{i})_{i}$, note that
\[
\# \mathcal{B}_{n, [0, Mk_{1}]} \cdot \# \mathcal{B}_{n, [Mk_{1}, M(k_{1}+1)]} \cdot \# \mathcal{B}_{n, [M(k_{1} + 1), Mk_{2}]} \cdots \# \mathcal{B}_{n, [M(k_{m-1} + 1), n]} \ge \# \mathcal{B}_{n}.
\]
Recall that \[\begin{aligned}
\#\mathcal{B}_{n, [Mk_{i}, M(k_{i} + 1)]} &\le \#B(e, M), \\
\#\mathcal{B}_{n, [M(k_{i} + 1), Mk_{i+1}]} &\le 7\#B(e, N_{sep}) \cdot \#\mathcal{B}^{\circ}_{n, [M(k_{i} + 1), Mk_{i+1}]}
\end{aligned}
\]for each $i$. Using these inequalities, we deduce that the number of possible $(g_{1}, \ldots, g_{m})$ for a given $(k_{i})_{i}$ is at least \[\begin{aligned}
\prod_{i=1}^{m} \# \mathcal{B}_{n, [M(k_{i-1} + 1), Mk_{i}]}^{\circ} &\ge \frac{1}{(7\#B(e, N_{sep}))^{m}} \prod_{i=1}^{m} \# \mathcal{B}_{n, [M(k_{i-1} + 1), Mk_{i}]} \\
&\ge \frac{\#\mathcal{B}_{n}}{\left(7\#B(e,N_{sep})\#B(e, M)\right)^{m} }.
\end{aligned}
\]
Summing this up, the number of all possible inputs is at least 
\[
\sum_{m=1}^{|I_n|/M} \binom{|I_n| /M}{m-1} \cdot \frac{\#\mathcal{B}_{n}}{(C \#B(e, M))^{m}} \ge \#\mathcal{B}_{n} \cdot \frac{1}{C \#B(e, M)}\left(1 + \frac{1}{C\#B(e, M)}\right)^{|I_n|/ M},
\]
where $C = 7\#B(e, N_{sep})$.

We will be done with the proof of Proposition \ref{prop:growthGap} if we prove the following:
\begin{lem}
    If $F\left((k_i)_{i},(g_i)_i\right) = F\left((k'_i)_{i},(g'_i)_i\right)$, then $(k_i)_i = (k'_i)_i$ and $(g_i)_i = (g'_i)_i$.
\end{lem}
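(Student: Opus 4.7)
My plan is to recover the data $((k_i)_i, (g_i)_i)$ from the group element $w := F((k_i)_i, (g_i)_i)$ by reading off ``anchor positions'' along any geodesic $\gamma$ from $e$ to $w$. I will use the alignment statement established earlier in the proof of Proposition~\ref{prop:growthGap}: the chain
\[
\bigl(e,\,w_1[e,a_1],\,v_1[e,b_1],\,\ldots,\,v_{m-1}[e,b_{m-1}],\,w_m\bigr)
\]
is $D_1$-aligned, with each $[e,a_i]$, $[e,b_i]$ being $D$-contracting of length at least $L \geq L_1$. First, I will apply Lemma~\ref{lem:concat} to conclude that $\gamma$ contains subsegments $\alpha_i, \beta_i$ (for $i=1,\ldots,m-1$, in order from left to right) that $D_2$-fellow travel with $w_i[e,a_i]$ and $v_i[e,b_i]$ respectively. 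Parametrizing $\gamma$ by arclength, each of the group elements $w_i, w_i a_i, v_i, v_i b_i$ is thereby pinned down as a specific point $\gamma(t)$ on $\gamma$ up to $D_2$-error.

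Next I would assume that a second input $((k'_j)_j, (g'_j)_j)$ also maps to $w$. The same argument produces a second family of anchors $\alpha'_j, \beta'_j$ along the same geodesic $\gamma$. The crucial step is to show these two anchor families must coincide one-to-one. For this I plan to exploit the gap structure: within either decomposition, consecutive anchors are separated either by a gap of length roughly $|f|$ (between $\alpha_i$ and $\beta_i$) or by a gap of length at least $M - 2L - |f| > 8L + 10 D_2$ everywhere else (using $|g_i| \geq M$ for the intermediate indices, together with the endpoint estimates). Since both anchor families live along the same geodesic $\gamma$ with this large-gap geometry and the fellow-travel error is only $D_2$, each anchor of one family must be paired with a unique anchor of the other, forcing $m = m'$ and $|w_i| = |w'_i|$ up to $2D_2$.

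Once the anchors are matched, the inter-anchor gap lengths give $|g_i| = |g'_i|$, and then $k_i = k'_i$ follows from the explicit formula for $|g_i|$ in terms of $k_i, k_{i-1}$. To pass from lengths to the actual group elements, I note that $g_i = (v_{i-1} b_{i-1})^{-1} \cdot \gamma(|w_i|)$, which is determined up to a $2D_2$-error on each side; since $g_i, g'_i \in \mathcal{B}_n^\circ$ and the $(H, N_{sep})$-separation with $N_{sep} = 20 D_2 > 4 D_2$ forbids distinct elements of $\mathcal{B}_n^\circ$ from being that close, we get $g_i = g'_i$. The canonical choice rule for $a_i, b_i$ then yields $a_i = a'_i$, $b_i = b'_i$, and induction closes the argument.

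The main obstacle I foresee is the anchor-matching step: a priori the two decompositions could place their anchors at slightly different positions along $\gamma$, and one must rule out that they interleave or nest rather than pair up. The resolution hinges on the deliberately large choice $M > 10(L + D_2) + |f|$ from the proposition's setup, which makes the gap lengths dominate the fellow-traveling error $D_2$ by a wide margin, together with the $(H, N_{sep})$-separation of $\mathcal{B}_n^\circ$ that lets us promote ``close in the Cayley graph'' to ``equal''.
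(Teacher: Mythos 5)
Your plan has a genuine gap, and it originates in a false arithmetic claim. You assert that for intermediate indices one has $|g_i| \geq M$, so that the anchor families are forced to pair up because consecutive anchors are separated by gaps much longer than the fellow-traveling error. But by construction $g_i \in \mathcal{B}^\circ_{n, [M(k_{i-1}+1), Mk_i]}$, whose elements have length exactly $M(k_i - k_{i-1} - 1)$; this vanishes whenever $k_i = k_{i-1}+1$, which is perfectly allowed since the $k_i$ are merely required to be strictly increasing integers. So consecutive anchors can abut each other up to the $O(D_2)$ error, and the ``large-gap dominates error'' mechanism you invoke to match the two anchor families simply does not hold. Once the gaps can be small, there is no purely metric obstruction to the two anchor families interleaving or nesting (for instance, all of decomposition 1's anchors might sit inside a single gap of decomposition 2 if $k_1 < k'_1$), and your argument provides no way to rule this out.

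The paper's proof does not attempt to force the anchor positions to coincide a priori. It splits into two cases. When the $k_i$'s agree up to index $j$ but $g_j \neq g'_j$, it shows that the two candidate positions for the end of the $j$-th block along $[e,s]$ are within $11 D_2$, and then uses the $(H, N_{\mathrm{sep}})$-separation of $\mathcal{B}^\circ$ (with $N_{\mathrm{sep}} = 20D_2$) to conclude $g_j = g'_j$; this part is close in spirit to what you propose. But when $k_j < k'_j$, the paper derives the contradiction from the \emph{definition of $\mathcal{B}$}, not from geometry: it shows that the inserted block $a_j f b_j$ of decomposition 1 is wholly nested inside the gap $g'_j$ of decomposition 2, and then, via Lemma \ref{lem:concatConverse} and Corollary \ref{cor:contractingHereditary}, exhibits a $D_4$-aligned decomposition $g = u_1 v_1 f v_2 u_2$ with $|u_1| \in I_{|g|}$ for the original element $g \in \mathcal{B}_n$, contradicting $\mathcal{B} = G \setminus \mathcal{A}$. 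This step has no analogue in your sketch; the separation property alone cannot detect a shift of the $k$-indices. You would need to add something like the paper's second-case argument (or another mechanism exploiting what membership in $\mathcal{B}$ rules out) to make the injectivity proof go through.
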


\begin{figure}
    \centering
    \begin{tikzpicture}
    \def\c{1.1}
    \draw (0, 0.2*\c) -- (9*\c, 0.2*\c) -- (9*\c, 0.4*\c) -- (0, 0.4*\c) -- cycle;
    \fill[black!30] (1.5*\c, 0.2*\c) -- (3*\c, 0.2*\c) -- (3*\c, 0.4*\c) -- (1.5*\c, 0.4*\c) -- cycle;
    
    \fill[black!30] (4.5*\c, 0.2*\c) -- (6*\c, 0.2*\c) -- (6*\c, 0.4*\c) -- (4.5*\c, 0.4*\c) -- cycle;
    \fill[black!15] (2*\c, 0.2*\c) -- (2.5*\c, 0.2*\c) -- (2.5*\c, 0.4*\c) -- (2*\c, 0.4*\c) -- cycle;
    \fill[black!15] (5*\c, 0.2*\c) -- (5.5*\c, 0.2*\c) -- (5.5*\c, 0.4*\c) -- (5*\c, 0.4*\c) -- cycle;

    \draw (0, -0.2*\c) -- (9*\c, -0.2*\c) -- (9*\c, -0.4*\c) -- (0, -0.4*\c) -- cycle;
    \fill[black!30] (1.5*\c, -0.2*\c) -- (3*\c, -0.2*\c) -- (3*\c, -0.4*\c) -- (1.5*\c, -0.4*\c) -- cycle;
    
    \fill[black!30] (7.5*\c, -0.2*\c) -- (9*\c, -0.2*\c) -- (9*\c, -0.4*\c) -- (7.5*\c, -0.4*\c) -- cycle;
    
    \fill[black!15] (2*\c, -0.2*\c) -- (2.5*\c, -0.2*\c) -- (2.5*\c, -0.4*\c) -- (2*\c, -0.4*\c) -- cycle;
    \fill[black!15] (8*\c, -0.2*\c) -- (8.5*\c, -0.2*\c) -- (8.5*\c, -0.4*\c) -- (8*\c, -0.4*\c) -- cycle;

    \fill (-0.3*\c, 0) circle (0.07);
    \fill (9.3*\c, 0) circle (0.07);
    \draw[dashed] (-0.3*\c, 0) -- (9.3*\c, 0);

    \end{tikzpicture}
    \caption{Insertion of $a_{i} f b_{i}$'s. Independent of the contracting power of $f$, the endpoints of the darkly filled segments are nearby the horizontal geodesic. It follows that: $g_{1} = g_{1}'$ and $g_{2}'$ cannot come from $\mathcal{B}_{n, [M(k_{2} + 1), Mk_{2}]}$.}
    \label{fig:align}
\end{figure}
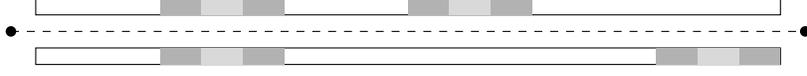

\begin{proof}
    
Let $(k_{i})_{i}, (k_{i}')_{i}$ be given and $(g_{i})_{i}, (g_{i}')_{i}$ be given. 
Let us assume $s = F\left((k_{i})_{i}, (g_{i})_{i}\right) = F\left((k_{i}')_{i}, (g_{i}')_{i}\right)$.

We first discuss the case where $(k_{1}, \ldots, k_{j}) = (k'_{1}, \ldots, k_{j}')$, $(g_{1}, \ldots, g_{j-1}) = (g_{1}', \ldots, g_{j-1}')$ but $g_{j} \neq g_{j}'$. 

The alignment lemma (Lemma \ref{lem:concat}) tells us that $[e, s]$ passes through the $D_{2}$-neighborhood of $p_{0} :=  g_{1} a_{1} f b_{1} \cdots a_{j-1} f$, $p_{1} := p_{0} b_{j-1}$ and $p_{2} := p_{0} b_{j-1} g_{j}$. 
Let $q_{0}, q_{1}, q_{2} \in [e, s]$ be such that $d(p_{i}, q_{i}) \le D_{2}$. Then 
\[\begin{aligned}
d(e, q_{2}) &= d(e, q_{0}) + d(q_{0}, q_{1}) + d(q_{1}, q_{2}) \\
&\ge d(e, q_{0}) + (L-  2 D_{2}) + (|g_{j}| -  2 D_{2}) \\
&\ge d(e, q_{0}) + (L -  2 D_{2}) + (M(k_{j}  - k_{j-1} - 1)-  2 D_{2})
\end{aligned}.
\]
and similarly $d(e, q_{2}) \le d(e, q_{0}) + L + (M(k_{j}  - k_{j-1} - 1)) + 5D_{2}$. 

Meanwhile, $[e, s]$ also passes through the $D_{2}$-neighborhood of $p_{1}' = p_{0} b_{j-1}'$ and $p_{2}'= p_{0} b_{j-1}' g_{j}'$. If we let $q_{2}' \in [e, s] \cap B(p_{2}', D_{2})$, we have the same estimates for $d(e, q_{2}')$, using the fact that $k_{j-1} = k_{j-1}'$, $k_{j} = k_{j}'$. In particular, we have $d(q_{2}, q_{2}') \le 9 D_{2}$, and $d(p_{2}, p_{2}') \leq 11 D_{2}$.

Meanwhile, note that $d(p_{2}, p_{2}') = d(g_{j}, b_{j-1}^{-1} b_{j-1}' g_{j}')$ and $b_{j-1}^{-1} b_{j-1}' \in H$. Since $g_{j}, g_{j}'$ are distinct elements of the $(H, N_{sep})$-separated set $\mathcal{B}_{n, [M(k_{j-1} + 1), Mk_{j}]}^{\circ}$, this cannot happen.

Hence, if $k_{i}$ and $k_{i}'$ match till $i=j$, then $g_{i}$ and $g_{i}'$ also match till $i=j$. 

We now discuss when $(k_{i})_{i} \neq (k_{i}')_{i}$. Say, $k_{i} = k_{i}'$ for $i < j$ but $k_{j} < k_{j}'$. This implies that $g_{i} =g_{i}'$ for $i < j$ and \[
g_{0} a_{1} f b_{1} \cdots a_{j-1} f = g_{0}' a_{1}' f b_{1}' \cdots a_{j-1}' f
\]
holds. We denote this point by $p_{0}$. When $j = 1$, we set $p_{0} = id$; recall $b_{0} = b_{0}' =e $ also.

We know by definition of $a_j$ and $b_j$ that 
$$(e, \ldots,  \,p_{0} [e, b_{j-1}], \,p_{0} b_{j-1} g_{j} [e, a_{j}], p_{0} b_{j-1} g_{j} a_{j} f [e, b_{j}], \ldots, s)$$ 
is $D_{1}$-aligned. By Lemma \ref{lem:concat}, $[e, s]$ contains subsegments \[
[P_{0}, P_{1}], \,\,[P_{2}, P_{3}]\,\, \textrm{and} \,\,[P_{4}, P_{5}],
\]in order from left to right, that $D_{2}$-fellow travel with \[
p_{0} [e, b_{j-1}], \,\,p_{0} b_{j-1} g_{j} [e, a_{j}]\,\, \textrm{and} \,\,p_{0} b_{j-1} g_{j} a_{j} f [e, b_{j}],
\]respectively. When $j = 1$, we instead apply Lemma \ref{lem:concat} to the sequence $(e, g_{1}[e, a_{1}], g_{1}a_{1}f[e, b_{1}], \ldots, s)$ to take subsegments $[P_{2}, P_{3}]$ and $[P_{4}, P_{5}]$ of $[e, s]$ as desired, and then take $P_{0} = P_{1} = e$.

Similarly, since $(e, \ldots, p_{0}[e,  b_{j-1}'], p_{0} b_{j-1}'  g_{j}' [e, a_{j}'], \ldots, s)$ is also $D_{1}$-aligned, $[e, s]$ contains subsegments $[Q_{1}, Q_{2}]$ and $[Q_{3}, Q_{4}]$, in order from left to right, that $D_{2}$-fellow travel with $p_{0}[e,  b_{j-1}']$ and $p_{0} b_{j-1}'  g_{j}' [e, a_{j}']$, respectively. (We again take $Q_{1} = Q_{2} = e$ when $j = 1$.)

We now claim \begin{equation}\label{eqn:2ndCaseInjectivity}
d(e, Q_{2}) - 6D_{2} \le d(e, P_{2}) \le  d(e, P_{5}) \le d(e, Q_{3}).
\end{equation}
For the first inequality we argue for the case $j>1$, as it automatically holds when $j=1$. Since $p_{0} b_{j-1}'$ and $Q_{2}$ are nearby, we have \[\begin{aligned}
d(e, Q_{2}) &\le d(e, p_{0}) + d(p_{0}, p_{0}b_{j-1}') + d(p_{0} b_{j-1}', Q_{2}) \\
&\le |p_{0}| + (L+D) + D_{2}.
\end{aligned}
\]
Next, since $[P_{0}, P_{1}]$ is a subsegment of $[e, P_{2}]$ that fellow travels with $p_{0}[e, b_{j-1}]$, we have \[\begin{aligned}
d(e, Q_{2}) &= d(e, P_{0}) + d(P_{0}, P_{1}) + d(P_{1}, P_{2}) \\
&\ge \big(|p_{0}| - D_{2}\big) + \big(|b_{j-1}| - 2D_{2} \big) \\
&\ge |p_{0}| + L - 3D_{2}.
\end{aligned}
\]
Combining these inequalities yields the first inequality. 

For the last inequality, note that $|g_{j}|' \ge |g_{j}| + M$ due to the assumption $k_{j} < k_{j}'$. Therefore we have\[
\begin{aligned}
d(e, P_{5}) &\le d(e, p_{0}b_{j-1}g_{j}a_{j} fb_{j}) + d(p_{0}b_{j-1}g_{j}a_{j} fb_{j}, P_{5})\\
&\le |p_{0}| + |b_{j-1}| + |g_{j}| + |a_{j}| + |f| + |b_{j}| + D_{2} \\
&\le |p_{0}| + |g_{j}| + M-5D_{2} \\
&\le |p_{0}| + |g_{j}|' - 5D_{2} \\
&\le \big(|p_{0}| - D_{2}\big) + \big(|b_{j-1}'| - 2D_{2}\big) + \big(|g_{j}'| - 2D_{2}) \\
&\le d(e, Q_{1}) + d(Q_{1}, Q_{2}) + d(Q_{2}, Q_{3}) = d(e, Q_{3}).
\end{aligned}
\]

Given Inequality \ref{eqn:2ndCaseInjectivity}, let $Q_{2}'$ be the point on $[e, s]$ with $d(e, Q_{2}') = \min (d(e, Q_{2}), d(e, P_{2}))$. Then $[P_{2}, P_{3}]$ and $[P_{4}, P_{5}]$ are subsegements of $[Q_{2}', Q_{3}]$, in order from left to right, that fellow travel with $p_{0}b_{j-1} g_{j} [e, a_{j}]$ and $p_{0}b_{j-1} g_{j} a_{j }f[ e, b_{j}]$, respectively. Lemma \ref{lem:concatConverse} then tells us that \[
(Q_{2}', p_{0}b_{j-1} g_{j} [e, a_{j}], p_{0}b_{j-1} g_{j} a_{j }f[ e, b_{j}], Q_{3})
\]
is $D_{3}$-aligned. Note also that \[\begin{aligned}
d(Q_{2}', p_{0} b_{j-1}') &\le d(Q_{2}', Q_{2}) + d(Q_{2}, p_{0}b_{j-1}') \le 7D_{2}, \\
d(Q_{3}, p_{0} b_{j-1}' g_{j-1}') &\le D_{2}.
\end{aligned}
\]
By Corollary \ref{cor:contractingLip}, we conclude that \[
(p_{0} b_{j-1}', p_{0}b_{j-1} g_{j} [e, a_{j}], p_{0}b_{j-1} g_{j} a_{j }f[ e, b_{j}], p_{0} b_{j-1}' g_{j-1}')
\]
is $(D_{3} + 7D_{2} + 4D)$-aligned, hence $12D_{3}$-aligned. By the group equivariance we also have that 
\[
(e, (b_{j-1}')^{-1}b_{j-1} g_{j} [e, a_{j}], (b_{j-1}')^{-1} b_{j-1} g_{j} a_{j} f [e, b_{j}], g_{j}')
\]
is $12D_{3}$-aligned.
We now pick $g \in \mathcal{B}_{n}$ such that $g_{j}' = \gamma(g)_{M(k_{j-1}'+1)}^{-1} \gamma(g)_{Mk'_{j}}$. We also set
\[
u_{1} = \gamma(g)_{M(k_{j-1}' + 1)} (b_{j-1}')^{-1} b_{j-1} g_{j}.
\]
Then $[\gamma(g)_{M (k'_{j-1} + 1)}, \gamma(g)_{Mk_{j}'}]$ is a subsegment of a geodesic from $e$ to $g$ and \[
(\gamma(g)_{M(k_{j-1}' + 1)}, u_{1}[e, a_{j}], u_{1} a_{j} f [e, b_{j}], \gamma(g)_{Mk_{j}'})
\]
is $12D_{3}$-aligned. Corollary \ref{cor:contractingHereditary} 
now tells us that 
\[
(e, 
u_1
[e, a_{j}], 
u_1 a_{j} f [e, b_{j}], g)
\]
is $D_{4}$-aligned. Thus, we can write 
$g= u_{1}v_{1}fv_{2}u_{2}$
such that $(e, u_{1}[e, v_{1}], u_{1} v_{1} f [e , v_{2}], g)$ is $D_{4}$-aligned, and moreover, 
$$|u_{1}| \le M(k_{j-1}' + 1) + 2(L+D) + M(k_{j} - k_{j-1} - 1) \le Mk_{j} + 2(L+D) \le \max |I_n|.$$ 
This violates the condition that $g \in \mathcal{B}_{n}$, 
so it cannot happen.
\end{proof}

\medskip

As a consequence, we have \[
\# B(e, n) \ge \#\mathcal{B}_{n} \cdot \frac{1}{C \#B(e, M)}\left(1 + \frac{1}{C\#B(e, M)}\right)^{|I_n|/ M},
\]
which is the desired growth gap, completing the proof of Proposition \ref{prop:growthGap}. 

\begin{proof}[Proof of Theorem \ref{thm:counting}]
Let us prove the second item of the theorem. We assume that $G$ is not  hyperbolic and contains a contracting element, and we fix an arbitrary $K>0$. Let constants $D, L$, and $D_i$ for $1 \leq i \leq 5$ be given as in the paragraph before Lemma \ref{lem:uniquecontract}, and 
let $E = E(D_{5} + K)$ be as in Lemma \ref{lem:nearby}. Recall that if $\gamma$ is a $K$-contracting geodesic and $x, y$ are in the $D_{5}$-neighborhood of $\gamma$, then $[x, y]$ must be $E$-contracting.

By Proposition \ref{prop:not-hyp}, since $G$ is not hyperbolic, there exists an element $f \in G$ which is contracting but not $E$-contracting. 

Now let $I_n = [\eta, \eta + \epsilon]\cdot n \cap \mathbb{N}.$ Disregarding an exponentially decaying fraction of $B(e,n)$, consider $g \in \mathcal{A}_n := \mathcal{A} \cap B(e, n)$, where $\mathcal{A}$ is taken from Proposition \ref{prop:growthGap}. Let $g = u_{1} v_{1} f v_{2} u_{2}$ be the decomposition given by Proposition \ref{prop:growthGap}. Since $(e, u_{1}[e, v_{1}], u_{1} v_{1} f [e, v_{2}], g)$ is $D_{4}$-aligned, $[e, g]$ contains subsegments $[P_{1}, P_{2}]$ and $[P_{3}, P_{4}]$, from left to right, that $D_{5}$-fellow travel with $u_{1}[e, v_{1}]$ and $u_{1} v_{1} f [e, v_{2}]$, respectively. It follows that $|u_{2}| \le (1- \eta) |g| + 8D_{5}$.

For such a $g$, we have the following two cases: \begin{enumerate}[label=(\alph*)]
\item $d([e, u_{2}^{-1}], u_{1}[e, v_{1}]) \le D$;
\item $d([e, u_{2}^{-1}], u_{1}[e, v_{1}]) > D$.
\end{enumerate}
In the first case, let $p \in [e, u_{2}^{-1}]$ and $q \in u_{1}[e, v_{1}]$ be such that $d(p, q) \le D$. Then, using $D \leq D_5$, 
\[\begin{aligned}
d(u_{2}^{-1}, u_{1}) &\le d(u_{2}^{-1}, p) + d(p, u_{1}) \\
&\le \big[d(u_{2}^{-1}, e) - d(p, e) \big] + d(p,  u_{1}) \\
&\le d(u_{2}^{-1}, e) - [d(e, u_{1}) - d(u_{1}, p)] + d(p, u_{1}) \\
&\le |u_{2}| - |u_{1}| + 2 [d(p, q) + d(q, u_{1})] \\
&\le (1-2\eta) |g| + 10(D_{5}+ L).
\end{aligned}
\]

Now, we know by definition that $u_1 \in B(e, (\eta + \epsilon)n)$; moreover, there are at most $3$ choices for each of $v_1, v_2$, and as we just saw $u_2^{-1} \in B(u_1, (1 - 2 \eta + \epsilon) n)$, hence the number of possible values of $g = u_1 v_1 f v_2 u_2$ is at most

\[ 
9 \cdot \# B(e, (\eta + \epsilon)n) \cdot \#B(e, (1 - 2 \eta + \epsilon) n) \lesssim \lambda^{1 - \eta + 2 \epsilon}.
\]

By taking $\eta > 0$ and $\epsilon < \eta/2$, 
by disregarding an exponentially decaying fraction of $B(e,n)$, we can rule out such $g$.

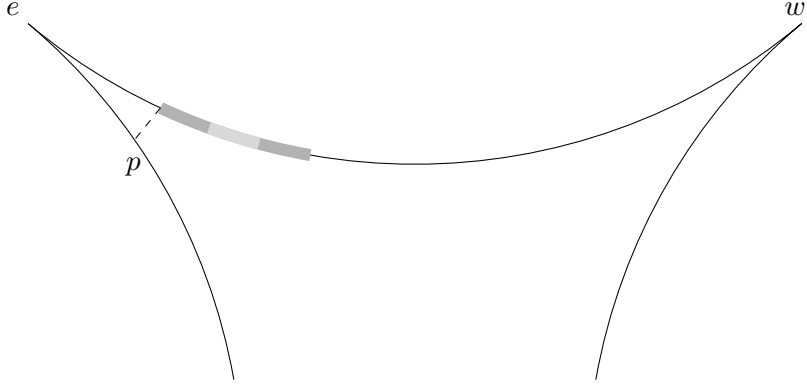
\begin{figure}
    \centering
    \begin{tikzpicture}
    \def\c{1}
        \draw (0,0) arc (-130:-50:8*\c) arc (130:170:8*\c);
        \draw (0, 0) arc (50: 10 : 8*\c);
        \draw[line width=1.6mm, black!30] (1.761354783566717*\c, -1.122106751341376*\c) arc (-115:-100:8*\c);
        \draw[line width=1.6mm, black!15] (2.406139730886963*\c, -1.389185421335443*\c) arc (-110:-105:8*\c);
        
        \draw (-0.2*\c, 0.2*\c) node {$e$};
        \draw (10.2*\c, 0.2*\c) node {$w$};
        \draw (1.4*\c, -1.9*\c) node {$p$};
        \draw[dashed] (1.761354783566717*\c, -1.122106751341376*\c) -- (1.43*\c, -1.53*\c);
    \end{tikzpicture}
    \caption{If the contracting segment $\eta$ on $[e, w]$ is not visible on the axis of $w$, then $\eta$ is near $p \in [e, w^{-1}]$. This allows us to construct $w$ out of the steps on $[e, w]$ till $p$ and then a small linkage word between $p$ and $\eta$, which consists of $(1- \eta + 3\epsilon)n$ letters.}

    \label{fig:my_label}
\end{figure}

Now in the second case, recall the $D$-contracting property of $u_{1}[e, v_{1}]$: since $[e, u_{2}^{-1}]$ is disjoint from its $D$-neighborhood, we have $d_{[e, v_{1}]}(u_{1}^{-1}, u_{1}^{-1}u_{2}^{-1}) \le D$. Moreover, note that
\[\begin{aligned}
d(u_{1}[e, v_{1}], u_{2}^{-1} [e, v_{2}^{-1}]) &\ge |u_{2}| - |u_{1}| - |v_{1}| - |v_{2}|\\
&\ge (1- \eta - \epsilon) n - 4 (D_{5} + L) -|f| \ge D.
\end{aligned}
\]
This implies that the projections of $u_{1}[e, v_{1}]$ and $u_{2}^{-1} [e, v_{2}^{-1}]$ onto each other have diameters smaller than $D$. For a similar reason, the projection of $[e, u_{1}]$ onto $u_{2}^{-1}[e, v_{2}^{-1}]$ has diameter smaller than $D$. We then deduce \[\begin{aligned}
|\pi_{[e, v_{1}]}(u_{1}^{-1}u_{2}^{-1}[e, v_{2}^{-1}])| &\le |\pi_{[e, v_{1}]} (u_{1}^{-1})| + \diam\big(\pi_{[e, v_{1}]}(u_{1}^{-1}[e, u_{2}^{-1}])\big) \\
&+ \diam\big(\pi_{[e, v_{1}]}(u_{1}^{-1}u_{2}^{-1}[e, v_{2}^{-1}])\big) \\
&\le 2D+D_{4}, \\
|\pi_{[e, v_{2}^{-1}])}(u_{2}u_{1}[e,v_{1}])| &\le 
|\pi_{[e, v_{2}^{-1}]}(u_{2})| + \diam 
\big(\pi_{[e, v_{2}^{-1}])}(u_{2}[e, u_{1}])\big) \\&+ \diam 
\big(\pi_{[e, v_{2}^{-1}])}(u_{2}u_{1}[e, v_{1}])\big)\\
&\le 2D + D_{4}.
\end{aligned}
\]

In short, $(u_{2}^{-1} v_{2}^{-1}[e, v_{2}], u_{1}[e, v_{1}])$ is $3D_{4}$-aligned. Note that the $D_{4}$-alignment of $(u_{1}[e, v_{1}], u_{1}v_{1}f[e, v_{2}])$ is already given. Combining these,  we have that  \[
\big( \ldots,\, g^{n} u_{1}[e, v_{1}],\, g^{n}u_{1}v_{1}f[e, v_{2}], \,g^{n+1} u_{1} [e, v_{1}], \,\ldots \big)
\]
is $3D_{4}$-aligned. This means that $g$ has an axis that is $D_{5}$-close to the endpoints of certain translates of $[e, f]$. From this we conclude that $g$ is not $K$-contracting. Indeed, if this axis of $g$ were $K$-contracting, then $[e, f]$ would be $E$-contracting, which is not. Hence, we conclude that $g$ is not $K$-contracting generically.

Denote by $\lambda$ the growth of the group $G$. Thus, we have proven that for any $K > 0$ there exists $\mu < \lambda$, $c > 0$ such that 
\[ \# \{ g \ : \ g \textup{ is }K\textup{-contracting}, |g| = n \} \leq c \mu^n\]
for any $n$.
By summing over all spheres of radius at most $n$ we get the claim. 

The argument so far also leads to the first item of the theorem. Namely, when $G$ is hyperbolic, there exists $K>0$ such that every infinite geodesic on $G$ is $K$-contracting. Setting $f = e$ and running the previous argument, we deduce that a generic element $g \in B(e, n)$ has an axis, i.e. it is loxodromic and not elliptic. Since any axis of $g$ is automatically $K$-contracting, we conclude that a generic element in $B(e, n)$ is $K$-contracting.
\end{proof}

\section{Excursions} \label{S:exc}

In this section, we apply the methods developed in the previous sections to obtain statistical results on the maximal excursion in a subgroup of a finitely generated group $G$.  

The notion of excursion in a subgroup $H < G$ is a generalization to an arbitrary finitely generated group of the well-studied concept of excursion in the cusp of a hyperbolic manifold. 
In the coarser setting of Cayley graphs, excursions into subgroups have been defined in \cite{qing2021excursions}, and their statistical properties have been there established for right-angled Artin groups.

Here, we give a general definition and establish a theorem for excursion in any quasiconvex (in fact, even coarsely geodesically connected) subgroup. 

Following \cite{qing2021excursions}, given a subset $H$ of a finitely generated group $G$, we now define the $K$-coarse excursion of an element $g$ in $H$. We denote as $\mathcal{N}_R(S)$ the $R$-neighbourhood of a set $S$.

\begin{definition}
    Given any geodesic $\gamma$ in the Cayley graph of $G$, define the $K$-coarse excursion of $\gamma$ into $H$ as \[\mathcal{E}_{H,K}(\gamma) := \max_{t \in G} \diam(\gamma \cap \mathcal{N}_K(tH)).\]
    Given an element $g \in G$, define the $K$-coarse excursion of $g$ into $H$, denoted by $\mathcal{E}_{H,K}(g)$, as the maximum of $\mathcal{E}_{K,H}(\gamma)$ over all geodesics from the identity to $g$.
\end{definition}

To recover the notion discussed in \cite{qing2021excursions} we may set $K=0$. 
Say that a contracting element $f$ with axis $\gamma$ is \emph{strongly independent of $H$} if $\sup_{t\in G}\diam(\pi_{\gamma}(tH)) < \infty$. This is stronger than the notion of independence discussed previously, as we quantify over translates of $H$, as opposed to $H$ itself. We say that $H$ is \emph{coarsely geodesically connected} if there exists $R>0$ such that for any $x,y\in H$ there exists a geodesic connecting $x,y$ which lies entirely in $\mathcal{N}_R(H)$.
Let us remark that a quasiconvex set is coarsely geodesically connected. 
For any $n \geq 1$, we denote as $\mathbb{P}_n$ the uniform measure on the ball 
of radius $n$ in the Cayley graph of $(G, S)$.

\begin{thm}{\label{thm:excursions}}
    Let $G$ be a finitely generated group which is not virtually cyclic, and let $H \subset G$ be a coarsely geodesically connected subset with infinite diameter. Suppose that $G$ has a contracting element which is strongly independent of $H$. Then there exist $K > 0$ such that for any $p>0$ there exists $C_1,C_2$ such that the counting measure satisfies 
    \[\mathbb{P}_n \left(g \in G: C_1 \leq \frac{\mathcal{E}_{H,K}(g)}{\log n} \leq C_2\right) \geq 1-O(n^{-p}).\]
\end{thm}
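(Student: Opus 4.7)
The plan is to establish the upper and lower bounds separately, in each case by specializing Proposition~\ref{prop:growthGap} appropriately. Throughout, set $K := R + D_5$, where $R$ is the constant of coarse geodesic connectedness of $H$ and $D_5$ is the fellow-traveling constant from Section~3.

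For the \emph{lower bound}, exploit the infinite diameter of $H$: for each $n$, pick an element $h_n \in H$ with $|h_n| = \lfloor C_1 \log n \rfloor$, so that any geodesic $[e, h_n]$ lies in $\mathcal{N}_R(H)$ by coarse geodesic connectedness. Invoke Proposition~\ref{prop:growthGap} with $f := h_n$ and $M := 10(L+D_2) + |h_n| + 1 = O(\log n)$; this is allowed since the proposition holds for arbitrary $f \in G$. For any $g \in \mathcal{A}_n$ with decomposition $g = u_1 v_1 h_n v_2 u_2$, Lemma~\ref{lem:concat} yields subsegments of every geodesic $[e,g]$ that $D_5$-fellow travel with $u_1[e, v_1]$ and $u_1 v_1 h_n [e, v_2]$; their adjacent endpoints lie within $D_5$ of $u_1 v_1$ and $u_1 v_1 h_n$, both of which lie within $R$ of $u_1 v_1 H$. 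These two points of $[e,g]$ thus lie in $\mathcal{N}_K(u_1 v_1 H)$ and are separated by distance $\geq |h_n| - O(1)$, giving $\mathcal{E}_{H,K}(g) \geq C_1 \log n - O(1)$. The ratio $\#\mathcal{B}_n/\#B(e,n)$ remains $O(n^{-p})$ because $\#B(e, M) \leq e^{\lambda M} = n^{\lambda C_1 + o(1)}$ is polynomial in $n$ while the growth factor $(1 + 1/(C\#B(e,M)))^{|I_n|/M}$ is super-polynomial, provided $C_1 < 1/\lambda(G, S)$ and $|I_n|$ is taken of length $\Omega(n)$.

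For the \emph{upper bound}, take $f$ to be the contracting element strongly independent of $H$, with axis $\gamma_f$. Strong independence gives $B_0 := \sup_t \diam(\pi_{\gamma_f}(tH)) < \infty$, and together with the contracting property of $\gamma_f$ forces the projection $\pi_\gamma(tH)$ of any translate $tH$ onto a generic geodesic $\gamma = [e,g]$ to be confined, up to additive constant, within a single ``gap'' between consecutive $\gamma_f$-fellow-traveling subsegments of $\gamma$: a long fellow-traveling subsegment straddling two points of $\pi_\gamma(tH)$ would force $\pi_{s\gamma_f}(tH)$ to have diameter exceeding $B_0$. Since $\gamma \cap \mathcal{N}_K(tH) \subset \mathcal{N}_{2K}(\pi_\gamma(tH))$, this yields
\[
\diam(\gamma \cap \mathcal{N}_K(tH)) \leq 4K + \text{(maximum gap between $f$-fellow travelers of $\gamma$)} + O(1).
\]
Setting $L := C_2 \log n$ and applying Proposition~\ref{prop:growthGap} with $I_n := [a, a + L - O(1)]$ for each $a \in \{0, \ldots, n\}$, using a fixed $M$ independent of $n$, a union bound over the $n$ choices of $a$ shows that all gaps have length $\leq L$ outside an exceptional set of counting measure $O(n (1 + 1/(C\#B(e,M)))^{-L/M})$, which is $O(n^{-p})$ for $C_2$ sufficiently large depending on $p$.

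The principal technical obstacle is the upper bound, specifically establishing that the diameter-type excursion $\diam(\gamma \cap \mathcal{N}_K(tH))$ is controlled by the maximum gap between $\gamma_f$-fellow-traveling subsegments. In a non-hyperbolic setting $\gamma$ can, in principle, exit and re-enter $\mathcal{N}_K(tH)$ in complex patterns, but the strong independence of $f$ from $H$ combined with the contracting property of $\gamma_f$ rules out re-entries straddling a long fellow-traveling region. The lower bound's quantitative constraint $C_1 < 1/\lambda(G, S)$ is also delicate, arising from the inflation of $M$ as the inserted element $h_n$ grows logarithmically with $n$.
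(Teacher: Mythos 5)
Your proposal takes essentially the same route as the paper's proof: both bounds come from Proposition~\ref{prop:growthGap}, with $f$ a suitably chosen element of length $\Theta(\log n)$ taken from near $H$ and $M\sim\log n$ for the lower bound, and with $f$ a high power of the strongly independent contracting element, $M$ fixed, and a union bound over $O(n/\log n)$ windows for the upper bound; the constant $C_1<1/\lambda$ and the super-polynomial growth of $(1+1/(C\#B(e,M)))^{|I_n|/M}$ are exactly what the paper uses. Two points of imprecision, both repairable: in the lower bound, $H$ is only a subset and need not contain an element of any prescribed length, so you should take $f_n\in H^{-1}H$ with $|f_n|\in[C_1\log n-R,C_1\log n+R]$ (coarse geodesic connectedness guarantees this) rather than $h_n\in H$ of length exactly $\lfloor C_1\log n\rfloor$; in the upper bound, routing the argument through $\pi_\gamma(tH)$ is delicate because $\gamma=[e,g]$ is a general (possibly non-contracting) geodesic, so its nearest-point projection is not coarsely Lipschitz and the step ``straddling a fellow-traveler forces $\pi_{s\gamma_f}(tH)$ to be large'' does not follow directly. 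The paper avoids this by starting from the actual points $x,y\in\gamma\cap\mathcal{N}_K(tH)$, using alignment (Corollary~\ref{cor:contractingHereditary}) to project them onto the contracting geodesic $u[e,f]$, then moving to nearby $x^*,y^*\in tH$ via the coarse Lipschitz property of projection onto a \emph{contracting} geodesic (Corollary~\ref{cor:contractingLip}), and finally passing to the axis $\gamma_f$ with Lemma~\ref{lem:contractingNbd} applied to a geodesic $[x^*,y^*]\subset\mathcal{N}_R(tH)$; this is the detail hiding behind the ``principal technical obstacle'' you flag, and it is where the explicit choice of $|f|$ in terms of $K,D,E,A,R$ enters.
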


\subsection{Application to RAAGs} 

In a right-angled Artin group, any subgroup which is not conjugated into a join subgroup contains a contracting element (\cite{behrstock2012divergence}, \cite{genevois2019}), 
so we can apply the previous theorem.

\begin{prop}
    Let $G$ be a non-elementary, irreducible right-angled Artin group $G$ equipped with the standard generating set, and let $H$ be an abelian vertex subgroup of rank at least two. Then there exists a contracting element $f$ which is strongly independent of $H$.
\end{prop}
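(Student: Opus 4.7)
The plan is to exhibit an explicit element $f \in G$ whose cyclically reduced support spans an anti-edge of $\Gamma$, which forces $f$ to be contracting by the Behrstock--Charney--Genevois criterion, and then to exploit that every translate of $H$ sits in a genuine Euclidean flat of dimension at least two to deduce uniformly bounded projections onto an axis of $f$.

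Since $G$ is non-elementary, $|V(\Gamma)| \geq 2$; since $\Gamma$ is irreducible, $\Gamma$ is not a complete graph (every complete graph on $\geq 2$ vertices is a nontrivial join), so in particular $\Delta \subsetneq V(\Gamma)$. Moreover, if every vertex of $V(\Gamma) \setminus \Delta$ were adjacent to every vertex of $\Delta$, the graph would decompose as the nontrivial join $\Delta \star (V(\Gamma) \setminus \Delta)$, again contradicting irreducibility. We may therefore choose $u \in V(\Gamma) \setminus \Delta$ and $v \in \Delta$ with $u$ and $v$ non-adjacent, and set $f := uv$. Its cyclically reduced form is $uv$ with support the anti-edge $\{u,v\}$, which does not lie inside any join subgraph of $\Gamma$; by \cite{behrstock2012divergence, genevois2019}, $f$ then acts on the Cayley graph as a loxodromic isometry admitting a contracting axis $\gamma$.

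To establish strong independence we must bound $\sup_{t \in G} \diam(\pi_\gamma(tH))$. Since $H \cong \mathbb{Z}^k$ with $k \ge 2$, the vertex subgroup $H$ is the fundamental group of a flat $k$-torus embedded in the Salvetti complex of $G$, so every translate $tH$ lies inside a genuine $k$-dimensional Euclidean flat of the universal cover $\widetilde S_\Gamma$, which is in turn quasi-isometric to the Cayley graph with bounded distortion. A contracting geodesic in a CAT(0) cube complex has uniformly bounded nearest-point projection onto any Euclidean flat of dimension $\geq 2$; the bound depends only on the contraction constant of $\gamma$, not on the particular flat, and since this constant is intrinsic to $f$ (and preserved by the isometric action of $G$ on itself), the same bound governs $\diam(\pi_\gamma(tH))$ uniformly over $t \in G$, yielding strong independence.

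The main technical ingredient is the uniform bound on projections of $\geq 2$-dimensional flats onto a contracting axis. A clean argument proceeds by contradiction: were $\diam(\pi_\gamma(t_n H)) \to \infty$ for some sequence $t_n$, one could pick $x_n, y_n \in t_n H$ with $d_\gamma(x_n, y_n) \to \infty$, and use an independent flat direction in $t_n H$ transverse to the segment from $x_n$ to $y_n$ to exhibit an auxiliary point $z_n \in t_n H$ arbitrarily far from $\gamma$ such that the broken path $[x_n, z_n] \cup [z_n, y_n]$ has projection to $\gamma$ of diameter tending to infinity while staying outside any fixed neighborhood of $\gamma$, violating the contracting property encoded in Lemma \ref{lem:contractingNbd}. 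One could alternatively argue combinatorially via the hyperplane structure of the Salvetti complex, using that the hyperplanes transverse to $\gamma$ intersect the parallel hyperplane families bounding $tH$ in uniformly few elements.
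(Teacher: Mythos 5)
Your construction of the contracting element $f = uv$ is incorrect. The Behrstock--Charney criterion does not say that an element whose cyclically reduced support spans an anti-edge is contracting; it says that a cyclically reduced $g$ has a contracting axis if and only if $\supp(g)$ is not \emph{contained} in any join subgraph of $\Gamma$. An anti-edge $\{u,v\}$ is contained in the join subgraph $\{w\} \star \{u,v\}$ as soon as $u$ and $v$ have a common neighbor $w$, in which case $uv$ commutes with $w$ and is manifestly non-contracting. Your argument nowhere rules out a common neighbor, and in fact it cannot: take $\Gamma$ the pentagon $C_5$ on vertices $1,\dots,5$ (irreducible and non-elementary), and $H = A_{\{1,2\}} \cong \Z^2$. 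Then $V \setminus \Delta = \{3,4,5\}$, and every non-adjacent pair with one endpoint in $\Delta$ and one outside --- e.g.\ $\{1,3\}$, $\{1,4\}$, $\{2,4\}$, $\{2,5\}$ --- has a common neighbor, so the corresponding $uv$ is never contracting. Your construction is therefore not just unproved but actually fails in a generic example.

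A second, less fatal, gap is in the strong-independence step: you invoke as a black box that a contracting geodesic in a CAT(0) cube complex has uniformly bounded projections onto $\geq 2$-dimensional flats, and the contradiction sketch (the auxiliary point $z_n$ and the ``broken path'') is too vague to reconstruct --- it is not clear why the broken path has large projection, stays far from $\gamma$, and why that contradicts Lemma~\ref{lem:contractingNbd} rather than merely being consistent with it. The paper's route is both shorter and correct: take \emph{any} contracting element $f$ (its existence follows from \cite{behrstock2012divergence} since $G$ is non-elementary and irreducible --- there is no need to tailor $f$ to $H$); suppose $\sup_t \diam(\pi_\gamma(tH)) = \infty$ and pick $x,y \in tH$ with $d_\gamma(x,y)$ large; since $tH$ is convex, the geodesic $[x,y]$ lies in $tH$; Lemmas~\ref{lem:contractingNbd} and~\ref{lem:nearby} give a long $E$-contracting subsegment of $[x,y]$, hence after translating by $t^{-1}$ an arbitrarily long $E$-contracting geodesic segment inside $H \cong \Z^k$, $k \geq 2$, which is impossible in a flat. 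The point is that strong independence of any contracting element from any such $H$ is automatic; nothing in the statement requires $f$ to be ``independent'' of $H$ by construction.
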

\begin{proof}
    By \cite{behrstock2012divergence}, the group $G$ has a contracting element $f$ for  the action on the Cayley complex with the CAT(0) metric. For some $D>0$, such an element $f$ is $D$-contracting with respect to the action on the Cayley graph \cite[pg. 16]{genevois2019}. If $\sup_{t\in G}\diam(\pi_{\gamma}(tH)) = \infty$, then we can find points $x,y$ in some $tH$ with $d_\gamma(x,y)$ arbitrarily large. By Lemmas \ref{lem:contractingNbd} and then \ref{lem:nearby}, there is a subsegment $[x',y']\subset [x,y]$ such that $d(\pi_\gamma(x),x'), d(\pi_\gamma(y), y') < 2D$, and $[x',y']$ is $E$-contracting for some $E=E(D)$. Observe that $d(x',y') \geq d_\gamma(x,y) - 4D > 3E$. By equivariance of the word metric, we can find arbitrarily long $E$-contracting segments inside $H$, which is impossible as it has rank at least two.
     \end{proof}

In particular, non-elementary irreducible RAAGs contain contracting elements. For any flat, there is a contracting element which is independent of the flat. As flats are geodesically connected, Theorem \ref{thm:excursions} applies to RAAGs, 
recovering the first part of the main result of \cite{qing2021excursions}.

\subsection{Proof of Theorem \ref{thm:excursions}} 
\begin{proof}[Proof of Theorem \ref{thm:excursions}]
We first show that the generic $D_5$-coarse excursion is at least logarithmic. To this end, we apply the results of the previous section with the interval $I_n = [0,1] \cdot n$ and any element $f_n \in H^{-1}\cdot H$, so that $e,f_n$ are contained in a common translate of $H$. We choose $f_n$ with $|f_n| \in [ C_1 \log n-R, C_1 \log n + R]$ for some small $C_1$ to be determined later. 

Recall the sets defined immediately before Proposition \ref{prop:growthGap}: 

\[\begin{aligned}
\mathcal{A} &:= \left\{ g = u_{1} v_{1}f_{|g|}v_{2} u_{2} : \begin{array}{c} (e, u_1 [e, v_{1}], u_{1}v_{1}f_{|g|}[e, v_{2}], g)\,\,\textrm{is $D_{4}$-aligned}, \\v_{1}, v_{2} \in \{h_{1}, h_{2}, h_{3}\}, \,\, |u_{1}| \in I_{|g|} \end{array} \right\}, \\
\mathcal{B} &:= G \setminus \mathcal{A}.
\end{aligned}
\]

Likewise, recall $\mathcal{B}_n := \{g\in \mathcal{B}: |g| = n\}$. For $g \in \mathcal{A}$ of length $n$, alignment guarantees that a subsegment $\gamma$ of $[e,g]$ is $D_5$-close to the endpoints of a translate of $[e,f_n]$. Such a subsegment $\gamma$ must have length at least $C_1\log n - 2D_5 - R$, so we have $\mathcal{E}_{H,D_5}(g) \geq C_1\log n - 2D_5-R$. 

Pick $M_n = 2C_1 \log n$, so that for $n$ sufficiently large we have $M_n > 10(L+D_2) + |f| $. By Proposition \ref{prop:growthGap}, we have 
\[\frac{ \# B(e,n) }{ \#\mathcal{B}_n} \geq  \frac{1}{C\#B(e, 2C_1\log n)} \left(1+\frac{1}{C \# B(e,2C_1 \log n)}\right)^{\left \lfloor \frac{n}{ 2C_1 \log n} \right \rfloor}.\]

We claim that for $C_1$ sufficiently small, the right-hand side grows faster than any polynomial. Since $G$ is finitely generated, we can take $C_1$ such that $B(e,2C_1\log n) \ll n^\alpha$ for some $\alpha < 1$. 

By taking logs, it suffices to show that for any $p$,  \[\frac{ n}{2C_1 \log n} \log \left(1+n^{-\alpha}\right) - p\log n \to \infty \] as $n\to\infty$. Since $\log(1+x) \geq \frac{x}{1+x} $ for $x\geq 0$, the expression is at least $\frac{ n}{2C_1 \log n} \frac{n^{-\alpha}}{2} - p\log n$. Since $\alpha < 1$, this shows the claim.\\

Now we show that a generic $K$-coarse excursion is at most logarithmic for any $K$. 

Let $f$ be a contracting element independent of $H$, with axis $\gamma$, and suppose without loss of generality that it is $D$-contracting. 
Passing to a power if necessary, assume that 
\[|f| > \sup_{t\in G}\diam(\pi_{\gamma}(tH)) +
2(2K + 12D + E + 3A + 2 R)
,\] 
where $R$ is such that $H$ is $R$-coarsely geodesically connected, 
$A=d_{Haus}(\{f^n\}_n, \gamma)$, and $E=E(D)$ is some constant chosen at the end of the proof. 
Pick a constant $M > 10(L+D_2) + |f|$. Let $C_2$ be a large constant, to be chosen later. By Proposition \ref{prop:growthGap}, for any interval $I_n$ of length at least $\frac{1}{2} C_2 \log n$, we have
\[
\frac{\# B(e,n)}{ \#\mathcal{B}_n} \geq  \frac{1}{C\#B(e, M)} \left(1+\frac{1}{C \# B(e,M)}\right)^{\left \lfloor \frac{C_2\log n}{2M}\right \rfloor}.\]

Fix $p > 0$. As $M$ is constant, we can choose $C_2$ sufficiently large so that the right-hand side grows faster than $n^{p+1}$. We can cover $[0,n]$ with at most $\frac{2n}{C_2 \log n}$ intervals of length $\frac{1}{2} C_2 \log n$. Then by the union bound, we know that for all but a $O(n^{-p})$ fraction of elements $g$ of length $n$, any subsegment of length at least $C_2 \log n$ of any geodesic $[e,g]$ contains a further subsegment that $D_5$-fellow travels the endpoints of a translate of $[e,f]$.

For these group elements $g$, we claim that the $K$-coarse excursion of $g$ into $H$ is at most $C_2 \log n$. Let $x,y\in \mathcal{N}_K(tH) \cap [e,g]$ for some $t,K$ and choice of $[e,g]$. Suppose for the sake of contradiction that $d(x,y)$ is larger than $C_2\log n$, so that some subsegment of $[x,y]$ will be $D_5$-close to the endpoints of $[u,uf]$ for some $u\in G$. Let $[x',y']$ be a subsegment of $[x,y]$ such that $d(x', u), d(y', uf) \leq D_5$. Then $(x', u[e,f], y')$ is $2D_5$-aligned. By Corollary \ref{cor:contractingHereditary}, $(x, u[e,f], y)$ is $E$-aligned for some $E=E(2D_5)$. 

Now, let $x^*, y^*$ be points in $tH$ within distance $K$ of $x,y$ respectively. Since the projection onto $u[e, f]$ is coarsely Lipschitz (Corollary \ref{cor:contractingLip}), we know that $(x^*, u[e,f], y^*)$ is $(2K+4D+E)$-aligned. 

Since $tH$ is coarsely geodesically connected, we can choose a geodesic segment $[x^*, y^*]$ which is contained in the $R$-neighbourhood of $tH$ for some $R>0$. From Lemma \ref{lem:contractingNbd}, we know that $[x^*, y^*]$ has a subsegment $[x'',y'']$ such that $d(x'', u[e,f]), d(y'', u[e,f]) < 2D$ and $\pi_{u[e,f]}(x^*,y^*)$ and $[x'',y'']$ are within Hausdorff distance $4D$. Moreover, there exist $\widetilde{x}, \widetilde{y}$ in $t H$ with $d(x'', \widetilde{x}) \leq R, d(y'', \widetilde{y}) \leq R$ 

In particular, we have 
\[\diam(\pi_{u[e,f]}(tH)) \geq d_{u[e,f]}(x'', y'') \geq |f| - 2(2K+8D+E), \] 
from which we deduce that  
\[\diam(\pi_{\gamma}(u^{-1}tH)) > |f| - 2(
2K + 12D + E + 3A + 2 R),\] which contradicts our choice of $f$.

To prove the last estimate, let 
$p = \textup{proj}_{u[e, f]}(x'')$, and $q = \textup{proj}_{ u \gamma}(\tilde{x})$. 
It is enough to prove that $d(p, q) \leq 3 A + 4D + 2 R$, from which the claim follows, as the same estimate holds for $y''$. 

To do so, let $s = \textup{proj}_{u \gamma}(p)$.
Since $q$ is the closest point projection, 
\[ d(q, \tilde{x}) \leq d(s, \tilde{x}) \leq d(s, p) + d(p, x'') + d(x'', \tilde{x}) \leq A + 2 D + R. \]

Moreover, by the triangle inequality 
$d(q, \tilde{x}) \geq d(q, s) - d(s, \tilde{x})$
so $d( q, s) \leq 2 (A + 2D + R)$.
Hence $d(p, q) \leq d(p, s) + d(s, q) \leq 3 A + 4 D + 2 R$
as required.
\end{proof}

\medskip
\bibliographystyle{alpha}
\bibliography{ref}

\end{document}